\def\mod{\hbox{\rm mod\,}}
\def\udim{\hbox{\rm \underline{dim}\,}}
\def\ra{{\rightarrow}}
\title[]{The GR-segments for tame quivers}
\author{Bo Chen}
\date{}
\address {Universit\"at zu K\"oln\\
          Mathematisches Institut\\
                   Weyertal 86-90\\
           D-50931 K\"oln\\ Germany}
\email {mcebbchen@googlemail.com}
\thanks{The author is supported by DFG-Schwerpunktprogramm 1388
`Darstellungstheorie'.}
\newtheorem{theo}{Theorem}[section]
\newtheorem{lemm}[theo]{Lemma}
\newtheorem{coro}[theo]{Corollary}
\newtheorem{prop}[theo]{Proposition}
\newtheorem*{theo*}{Theorem}
\begin{document}

\begin{abstract}
A GR-segment for an artin algebra is a
sequence of Gabriel-Roiter measures, which is closed under
direct predecessors and successors.  The number of the GR-segments indexed by 
natural numbers $\mathbb{N}$ and  integers $\mathbb{Z}$ probably relates to the representation types
of artin algebras.  Let $k$ be an algebraically closed field and $Q$ be a tame quiver (of type
$\widetilde{\mathbb{A}}_n$, $\widetilde{\mathbb{D}}_n$, $\widetilde{\mathbb{E}}_6$,
$\widetilde{\mathbb{E}}_7$, or $\widetilde{\mathbb{E}}_8$). Let $b$ be the number of the isomorphism classes of the exceptional quasi-simple modules over the path algebra $\Lambda=kQ$.
We show that the number of the $\mathbb{N}$- and
$\mathbb{Z}$-indexed GR-segments in the central part for $Q$ is bounded by $b+1$. 
Therefore, there are at most $b+3$ GR segments.
\end{abstract}

\maketitle

{\footnotesize{\it Keywords.}   tame quiver,
Gabriel-Roiter measure, direct predecessor, GR segment.}

{\footnotesize{\it Mathematics Subject Classification} (2000).
16G20,16G70}

\section{Preliminaries and main theorem}

We fist  recall what Gabriel-Roiter measures are
\cite{R1,R2}. Let $\mathbb{N}$=$\{1,2,\ldots\}$ be the set of
natural numbers and $\mathcal{P}(\mathbb{N})$ be the set of all
subsets of $\mathbb{N}$.  A total order on
$\mathcal{P}(\mathbb{N})$ can be defined as follows: if $I$,$J$
are two different subsets of $\mathbb{N}$, write $I<J$ if the
smallest element in $(I\backslash J)\cup (J\backslash I)$ belongs to
J. Also we write $I\ll J$ provided $I\subset J$ and for all elements
$a\in I$, $b\in J\backslash I$, we have $a<b$. We say that $J$ {\it
starts with} $I$ if $I=J$ or $I\ll J$.

Let $\Lambda$ be a connected artin algebra and  $\mod\Lambda$ be the category
of finite generated left $\Lambda$-modules. We denote by $|M|$ the length
of a $\Lambda$-module $M$.  For each
$M\in\mod\Lambda$, let $\mu(M)$ be the maximum of the sets
$\{|M_1|,|M_2|,\ldots, |M_t|\}$, where $M_1\subset M_2\subset \ldots
\subset M_t$ is a chain of indecomposable submodules of $M$. We call
$\mu(M)$ the {\it Gabriel-Roiter} (GR for short) {\it measure}  of $M$.
If $M$ is an
indecomposable $\Lambda$-module, we call an inclusion $T\subset M$
with $T$ indecomposable a {\it GR inclusion} provided
$\mu(M)=\mu(T)\cup\{|M|\}$, thus if and only if every proper
submodule of $M$ has Gabriel-Roiter measure at most $\mu(T)$. In
this case, we call $T$ a {\it GR submodule} of $M$.

An element $I\in\mathcal{P}(\mathbb{N})$ is called a GR measure for $\Lambda$ if
there is an indecomposable $\Lambda$-module $M$ with $\mu(M)=I$.
Given a GR measure $I$,
we denote by $\mathcal{A}(I)$ the set of representatives of (the isomorphism classes) of the
indecomposable
modules with GR measure $I$.
We also denote by
$|I|$ the maximal element of $I$, i.e.,
the length of $M$ with $M\in\mathcal{A}(I)$.
The following is a direct consequence of the definitions.
\begin{lemm}\label{basic}Let $I<I'<J$  be GR measures for $\Lambda$.
\begin{itemize}
  \item[(1)] If $J$ starts with $I$, then $I'$ starts with $I$.
  \item[(2)] If $J=I\cup\{|J|\}$, then $|I'|>|J|$.
\end{itemize}
\end{lemm}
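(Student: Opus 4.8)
The plan is to prove both parts by a direct combinatorial argument from the two order relations on $\mathcal{P}(\mathbb{N})$, using throughout that a GR measure is a \emph{finite} subset of $\mathbb{N}$ (it is bounded by a module length). The one observation I would record first is a reformulation of ``$J$ starts with $I$'': for finite sets it says precisely that $I$ is an initial segment of $J$ in the usual order, i.e.\ $I\subseteq J$ and $\max I<j$ for every $j\in J\setminus I$. Likewise, given $I<I'$ with $I\ne I'$, put $m=\min\bigl((I\setminus I')\cup(I'\setminus I)\bigr)$, so that $m\in I'\setminus I$ by the definition of $<$; then ``$I'$ starts with $I$'' is equivalent to ``no element of $I$ exceeds $m$'' (both implications follow easily, using $m\notin I$, hence $\max I\ne m$). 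With these reformulations in hand the arguments are short.

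For (1): since $I<I'<J$ we have $I\ne J$, so ``$J$ starts with $I$'' yields $I\ll J$, hence $I\subseteq J$ and every element of $J\setminus I$ exceeds $\max I$. Keep $m$ as above and set $p=\min\bigl((I'\setminus J)\cup(J\setminus I')\bigr)$, so $p\in J\setminus I'$ by the definition of $<$. Suppose, for contradiction, that some $q\in I$ satisfies $q>m$, and split on whether $m\in J$. If $m\in J$, then $m\in J\setminus I$ (as $m\notin I$), so $I\ll J$ forces $q<m$, a contradiction. If $m\notin J$, then $m\in I'\setminus J\subseteq(I'\setminus J)\cup(J\setminus I')$, so $m\ge p$, and in fact $m>p$ since $p\in J$ while $m\notin J$; next one checks $p\notin I$ (otherwise $p\in I\setminus I'\subseteq(I\setminus I')\cup(I'\setminus I)$ would give $p\ge m$, against $p<m$), so $p\in J\setminus I$ and $I\ll J$ forces $q<p$, while $p<m<q$ — again a contradiction. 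Hence no such $q$ exists, and by the reformulation $I'$ starts with $I$.

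For (2): from $J=I\cup\{|J|\}$ and $I\ne J$ we get $|J|=\max J\notin I$, so $|J|>|I|=\max I$ and therefore $I\ll J$, i.e.\ $J$ starts with $I$. Part (1) then gives that $I'$ starts with $I$, so $I\subseteq I'$ and consequently $J\setminus I'\subseteq J\setminus I=\{|J|\}$. If $|J|\in I'$ then $J\subseteq I'$, and the definition of $I'<J$ forces $I'\setminus J=\emptyset$, whence $I'=J$, impossible; so $|J|\notin I'$ and $J\setminus I'=\{|J|\}$. By the definition of $I'<J$, the smallest element of $(I'\setminus J)\cup\{|J|\}$ must lie in $J$, and since $I'\setminus J$ is disjoint from $J$ this forces every element of $I'\setminus J$ to be strictly larger than $|J|$. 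Finally $I'\setminus J\ne\emptyset$, for otherwise $I'\subseteq J$ together with $|J|\notin I'$ would give $I'\subseteq I$, hence $I'=I$, contradicting $I<I'$. Thus $I'$ contains an element exceeding $|J|$, i.e.\ $|I'|>|J|$.

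The genuinely fiddly step is the two-way case split in part (1): one must keep careful track of which of $I$, $I'$, $J$ each of the distinguishing elements $m$ and $p$ belongs to. Part (2) is then essentially bookkeeping once (1) is available. No representation theory enters here; as the text says, the lemma is a direct consequence of the definitions, and the only real content is packaging the two order relations into the ``initial segment'' form before starting.
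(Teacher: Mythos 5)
Your argument is correct: both reformulations (of ``starts with'' as an initial-segment condition, and of $I<I'$ via the distinguishing element $m$) are valid for the finite sets arising as GR measures, the case split in (1) is handled carefully, and the bookkeeping in (2) is sound. The paper itself gives no proof, merely calling the lemma a direct consequence of the definitions, so your write-up is exactly the intended elementary verification, just made explicit.
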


In \cite{R2}, the following theorem was proved:
\begin{theo}\label{partition} Let $\Lambda$ be a representation-infinite artin algebra.
Then  there are Gabriel-Roiter
measures $I_i$ and $I^i$:
 $$I_1<I_2<I_3<\ldots\quad\quad \ldots<I^3<I^2<I^1$$
such that any other GR measure $I$ satisfies $I_i<I<I^i$ for all $i$.
\end{theo}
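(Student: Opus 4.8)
My plan is to split the totally ordered set of Gabriel--Roiter measures of $\Lambda$ into a \emph{take-off part} $\mathbf{T}$ (those $I$ admitting only finitely many GR measures $\le I$), a \emph{landing part} $\mathbf{L}$ (those admitting only finitely many GR measures $\ge I$), and the remaining \emph{central part} $\mathbf{C}$. Since $\Lambda$ is representation-infinite, Brauer--Thrall produces indecomposables of unbounded length, hence infinitely many GR measures, so $\mathbf{T}\cap\mathbf{L}=\emptyset$. Both $\mathbf{T}$ and $\mathbf{L}$ are intervals of the order ($\mathbf{T}$ downward closed, $\mathbf{L}$ upward closed), so once I have shown $\mathbf{T}=\{I_1<I_2<\cdots\}$ and $\mathbf{L}=\{\cdots<I^2<I^1\}$ (both genuinely infinite), any GR measure $I$ not appearing among the $I_i$ or $I^i$ lies in $\mathbf{C}$, and being outside the downward closed $\mathbf{T}$ and the upward closed $\mathbf{L}$ it satisfies $I_i<I<I^i$ for all $i$, which is the assertion.

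The engine for the take-off part is the lemma that \emph{every GR measure which is not the largest one has a direct successor}, i.e.\ the GR measures strictly above a non-maximal $I$ have a least element $I^{+}$. I would prove this by a double minimality argument: let $n$ be the least length of an indecomposable $M$ with $\mu(M)>I$, and, among the finitely many possible GR measures of indecomposables of length $n$ that exceed $I$ (these are subsets of $\{1,\dots,n\}$), let $I^{+}$ be the smallest, realized by an indecomposable $N$. For any GR measure $J$ with $I<J\le I^{+}$, pick a realizing indecomposable and run its GR filtration $S=M_1\subset M_2\subset\cdots\subset M_t$; letting $M_{j_0}$ be the first term with $\mu(M_{j_0})>I$, one has $\mu(M_{j_0})=\mu(M_{j_0-1})\cup\{|M_{j_0}|\}$ with $\mu(M_{j_0-1})\le I$ and $I<\mu(M_{j_0})\le J\le I^{+}$; when $|M_{j_0}|=n$ minimality of $I^{+}$ forces $\mu(M_{j_0})=I^{+}=J$, and the case $|M_{j_0}|>n$ is handled using Lemma~\ref{basic}, which controls the GR measures lying strictly between $I$ and a one-step extension $I\cup\{m\}$. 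Granting this lemma, $\mathbf{T}$ is closed under direct successors; starting from the least GR measure $\{1\}$ (realized exactly by the simple modules) and iterating produces $I_1<I_2<\cdots$, and these exhaust $\mathbf{T}$ since nothing lies strictly between consecutive terms and anything above all of them has infinitely many predecessors. Finally $\mathbf{T}$ is infinite: if $\mathbf{T}=\{I_1,\dots,I_m\}$ then $I_m$ is not the largest GR measure (there are infinitely many but only finitely many $\le I_m$), so it has a direct successor, which again lies in $\mathbf{T}$, a contradiction.

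The landing part is where the real work lies, and it is genuinely harder, because the Gabriel--Roiter measure is built from submodules and does not dualize (submodules over $\Lambda^{\mathrm{op}}$ are factor modules over $\Lambda$), so the take-off argument does not simply transfer. Order-theoretically one still gets for free that, if $\mathbf{L}\neq\emptyset$, each non-empty set $\mathbf{L}\setminus\{I^1,\dots,I^k\}$ has a largest element $I^{k+1}$ — an element of such a set has only finitely many GR measures above it, and these already contain all larger elements of the set — so $\mathbf{L}$ is automatically of the form $\{\cdots<I^2<I^1\}$ \emph{provided} it is non-empty and infinite. Thus the theorem reduces to two representation-theoretic facts: (a) there exists a maximal GR measure, whence $\mathbf{L}\neq\emptyset$; and (b) $\mathbf{L}$ is infinite. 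Statement (a) is the main obstacle: it really uses that $\Lambda$ is an \emph{artin} algebra — the radical is nilpotent, so uniserial modules have bounded length, which is what prevents the GR measures from climbing without bound — and I would establish it by showing that the GR measure of every sufficiently long indecomposable is bounded above by one fixed GR measure. For (b), writing $Y_1$ for a realization of the maximal GR measure $I^1$, one produces from $Y_1$ a further indecomposable whose GR measure lies in $\mathbf{L}$ and is strictly smaller, and iterates; the point is that $\mathbf{L}$ cannot have a minimum, since immediately below it infinitely many GR measures would accumulate. With $\mathbf{T}$ and $\mathbf{L}$ enumerated in this way, the squeezing of the central part follows as indicated in the first paragraph.
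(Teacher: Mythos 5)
First, a remark on the comparison you were asked to survive: the paper does not prove this statement at all --- it is quoted from Ringel's paper \cite{R2} (with the Successor Lemma quoted from \cite{R3}) --- so your attempt has to stand on its own. Your order-theoretic frame (take-off $=$ measures with finitely many predecessors, landing $=$ measures with finitely many successors, squeeze the rest) is the right one and the reductions you carry out inside it are fine. But your proof of the Successor Lemma is wrong. You take $n$ to be the least length of an indecomposable with measure exceeding $I$ and let $I^{+}$ be the least such measure realized in length $n$; this $I^{+}$ need not be the direct successor. Already for the Kronecker quiver with $I=\{1\}$: the least length of an indecomposable of measure $>\{1\}$ is $n=2$, realized by the quasi-simple regular modules, all of measure $\{1,2\}$, so your $I^{+}=\{1,2\}$; but the indecomposable projective of length $3$ has measure $\{1,3\}$, and $\{1\}<\{1,3\}<\{1,2\}$. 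The true direct successor of $\{1\}$ is $\{1,3\}$, realized only in length $3>n$. So the case $|M_{j_0}|>n$ cannot be ``handled using Lemma~\ref{basic}'': in that case $J$ genuinely can differ from your $I^{+}$. The correct candidate is the minimum of $\mu(M)$ over \emph{all} indecomposables $M$ whose proper indecomposable submodules all have measure $\le I$, not just the shortest ones, and proving that this minimum exists --- i.e.\ that only finitely many one-point extensions $K\cup\{m\}$ with $K\le I$ occur --- is not a combinatorial fact about the total order; it needs Ringel's main property of GR inclusions, which bounds $|Y|$ in terms of $|X|$ for a GR inclusion $X\subset Y$.

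Second, the landing half, which you correctly identify as the hard part, is not actually proved: the existence of a maximal GR measure and the infinitude of the landing part are the substance of Ringel's theorem, and you only assert them. The heuristic you offer for (a) --- nilpotency of the radical bounds the length of uniserial modules, ``which is what prevents the GR measures from climbing without bound'' --- does not describe the mechanism: a GR filtration is a chain of indecomposable (not uniserial) submodules, the maxima $|I|$ of the GR measures are unbounded for every representation-infinite algebra, and the existence of a maximum in this lexicographic-type order has nothing to do with boundedness of the entries. Likewise your argument for (b), that $\mathbf{L}$ ``cannot have a minimum, since immediately below it infinitely many GR measures would accumulate,'' is not a contradiction: infinitely many measures can accumulate from below against a given measure without any of them having only finitely many successors. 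Both (a) and (b) again require the main property of GR inclusions (equivalently, the finiteness statements for $\mathcal{A}(I)$ along the take-off and landing sequences). As it stands, your proposal establishes only the purely order-theoretic reductions; the representation-theoretic content is missing, and the one representation-theoretic argument you do supply --- the double-minimality construction of the direct successor --- is false.
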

The GR measures $I_i$ (resp. $I^i$) are called {\it take-off} (resp. {\it landing}) measures.
Any other
GR measure is called a {\it central measure}. An indecomposable module $M$
is called a take-off (resp.
central, landing) module if $\mu(M)$ is a take-off (resp. central, landing) measure.

Let $I$ and $J$ be two GR measures for $\Lambda$.
Then $J$ is called a {\it direct successor} of $I$ if first, $I<J$
and second, there is no other
GR measure $I'$ such that $I<I'<J$.  The so-called Successor Lemma
in \cite{R3} claims that any GR measure
different from $I^1$, the maximal one, has a direct successor.
However, a GR measure, which is not the minimal one $I_1$,
may not admit a direct predecessor.

A sequence of GR measures for  $\Lambda$ is called a {\it GR segment} if it is closed under
taking direct predecessors and successors. By Theorem \ref{partition} and the Successor Lemma,
a GR-segment $\mathcal{S}$ is finite if and only it is the only GR segment,
and  if and only if $\Lambda$ is of finite representation type.

Fix a representation-infinite artin algebra.
Starting with a GR measure $\mu_0$, we may obtain a sequence of GR measures
by taking direct successors and predecessors:
$$\ldots<\mu_{-3}<\mu_{-2}<\mu_{-1}<\mu_0<\mu_1<\mu_2<\mu_3<\ldots$$
If $\mu_0$ is  not a landing measure, then $\mu_i$ exist for all $i\geq 1$ by Successor Lemma.
However, $\mu_{-j}$ may not exist for some $r\geq 1$ and any  $j\geq r$, since there are
GR measures admitting no direct predecessors.
A infinite GR segment can be naturally said to be indexed by natural numbers $\mathbb{N}$,
-$\mathbb{N}$ or by integers $\mathbb{Z}$. 

From now on, a GR segment always means an infinite one.  The following observations are straightforward:
\begin{itemize}
\item The unique $-\mathbb{N}$-indexed GR segment is the landing part.
\item The GR-segment containing a take-off measure is $\mathbb{N}$-indexed.
\item The $\mathbb{N}$-indexed GR segments one-to-one correspond
         to the GR measures admitting no direct predecessors.
\item A GR segment containing a central measure is either $\mathbb{N}$- or $\mathbb{Z}$-indexed.
\end{itemize}

The number of the $\mathbb{N}$- and $\mathbb{Z}$-indexed GR segments
was thought to relate the representation types of finite dimensional
algebras (or more general, artin algebras) \cite{Ch4,Ch5}.
It was conjectured that
a quiver is of wild type if and only if  there are infinitely many
$\mathbb{N}$- or $\mathbb{Z}$-indexed GR segments.
It was shown in \cite{Ch4} that for a tame quiver (of type
$\widetilde{\mathbb{A}}_n$, $\widetilde{\mathbb{D}}_n$, $\widetilde{\mathbb{E}}_6$,
$\widetilde{\mathbb{E}}_7$, or $\widetilde{\mathbb{E}}_8$) there are,
but only finitely many,  GR measures
admitting no direct predecessors. This precisely means that the number
of $\mathbb{N}$-indexed GR segments is finite. It was also proved in \cite{Ch5} that for
wild $n$-Kronecker quivers there are infinitely many $\mathbb{N}$-indexed GR segments.

From now on, let $k$ be an algebraically closed field and $Q$ be tame quiver.
We refer to \cite{ARS,DR,R1} for basic concepts of representation theory of (tame) quivers.
Let $X$ be a quasi-simple module. We denote by $R_X$ the rank of $X$,
i.e., the minimal natural number such that $\tau^{R_X}X\cong X$, where $\tau$ is the Auslander-Reiten 
translation.
Any indecomposable regular module $M$ is of the form $X_i$,
where $X$ is quasi-simple and $i$ is the quasi-length of $M$,
i.e., the length of the unique sequence of irreducible monomorphisms
$X=X_1\ra X_2\ra\ldots\ra X_i=M$. Let $M=X_i$ for some quasi-simple module $X$.
$M$ is called exceptional if $R_X\geq 2$.
Otherwise, $M$ is called homogeneous and denote by $H_i$.
If $X$ is quasi-simple, the dimension vector $\udim X_{R_X}=\delta$, where $\delta$
is the minimal imaginary root of $Q$. We also denote by $|\delta|$ the sum of all coordinates of $\delta$.
Thus it is the length $X_{R_X}$. Let $b$ be the number of the isomorphism classes of
the exceptional quasi-simple modules and $a$ be the number of the
isomorphism classes of the exceptional quasi-simple modules $X$
whose GR measures satisfy $\mu(X_{R_X})\geq \mu(H_1)$.
We list the value of $b$ as follows, where $p$ is the number
of the clockwise arrows and $q$ is the number of anti-clockwise
arrows of type $\widetilde{\mathbb{A}}_{p,q}$:

\begin{center}
\begin{tabular}{|c|c|c|c|c|c|}
\hline
& $\widetilde{\mathbb{A}}_n=\widetilde{\mathbb{A}}_{p,q}$ & $\widetilde{\mathbb{D}}_n$
 & $\widetilde{\mathbb{E}}_6$ & $\widetilde{\mathbb{E}}_7$ & $\widetilde{\mathbb{E}}_8$\\
\hline
b & \begin{tabular}{c|c|c}$p=q=1$ & $p=1,q>1$ or $q=1,p>1$ & $p,q>1$\\\hline 0 & $p+q-1$ & $p+q$ \end{tabular}
  & $n+2$ & $8$ & $9$ & $10$\\
\hline
\end{tabular}
\end{center}

In this paper, we will again focus on tame quivers and study the
structure of $\mathbb{N}$- and $\mathbb{Z}$-indexed
GR segments.
The following theorem will be proved:
\begin{theo*} Let $Q$ be a tame quiver. The number of the $\mathbb{Z}$-indexed GR segments is
bounded by $a$.  The number of the $\mathbb{N}$ and $\mathbb{Z}$-indexed GR segments in the central part
is bounded by $b+1$.
\end{theo*}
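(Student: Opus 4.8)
\emph{Sketch of proof.} The argument rests on the detailed description of the Gabriel--Roiter measures of tame quivers obtained in \cite{Ch4}, which I organise around certain \emph{threads}. For a homogeneous tube and every quasi-length $i$, the inclusion $H_{i-1}\subset H_i$ is a GR inclusion and $\mu(H_i)$ is independent of the tube, so $\mu(H_1)<\mu(H_2)<\cdots$ with $\mu(H_i)=\mu(H_{i-1})\cup\{i|\delta|\}$; call this the \emph{homogeneous thread}. For each of the $b$ exceptional quasi-simple modules $X$ and all large $i$, $X_{i-1}\subset X_i$ is again a GR inclusion, so the measures $\mu(X_i)$ form an increasing chain, the \emph{$X$-thread}. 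The first, and main, task is to prove, using \cite{Ch4} together with Lemma~\ref{basic}, that there is an integer $N$ with: (i) every central GR measure of length $\ge N$ lies on one of these $b+1$ threads; and (ii) along each thread consecutive measures form a direct predecessor/successor pair. For (ii): if $J$ follows $I$ on the homogeneous thread, so $J=I\cup\{|J|\}$, then by Lemma~\ref{basic}(2) any GR measure strictly between $I$ and $J$ has length $>|J|$ and by Lemma~\ref{basic}(1) starts with $I$; such a measure is ruled out by the classification in \cite{Ch4}, and similarly for the exceptional threads. I expect establishing (i) and (ii) for all large lengths to be the principal obstacle.

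Granting this, the count of central segments is short. GR segments are exactly the connected components of the graph on GR measures whose edges join each measure to its direct successor, hence partition all GR measures. Let $\mathcal{S}$ be a GR segment meeting the central part; it is $\mathbb{N}$- or $\mathbb{Z}$-indexed, so contains an infinite strictly increasing chain $\mu_0<\mu_1<\cdots$. Since there are only finitely many GR measures of any given length, $|\mu_i|\to\infty$, so for $i$ large $\mu_i$ is a central measure of length $\ge N$; by (i) it lies on a thread and by (ii) its direct successor lies on the same thread, so the upward tail of $\mathcal{S}$ coincides with one of the $b+1$ threads. Distinct central segments cannot share a tail, a thread being connected and so contained in a single component; hence at most $b+1$ GR segments meet the central part. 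Since the take-off part is the unique $\mathbb{N}$-indexed segment through $I_1$ and consists only of take-off measures (no GR measure lies strictly between $I_j$ and $I_{j+1}$), and the landing part is the unique $-\mathbb{N}$-indexed segment, the GR segments meeting the central part are precisely the $\mathbb{N}$- and $\mathbb{Z}$-indexed ones ``in the central part'', and their number is at most $b+1$.

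For the $\mathbb{Z}$-indexed segments, note each such is neither the take-off nor the landing part, hence meets the central part and is attached to one of the $b+1$ threads. I would first show the homogeneous thread's segment is $\mathbb{N}$-indexed: using \cite{Ch4} one checks that the GR measures below $\mu(H_1)$ have no largest element, so $\mu(H_1)$ has no direct predecessor. Next, for an exceptional quasi-simple $X$ the modules $X_{kR_X}$ and $H_k$ share the dimension vector $k\delta$, and on each thread the step from a module of dimension vector $k\delta$ to the next such module adjoins to the measure only elements greater than $k|\delta|$; induction on $k$ then gives $\mu(X_{kR_X})\ge\mu(H_k)$ for all $k$ exactly when $\mu(X_{R_X})\ge\mu(H_1)$ — that is, the $X$-thread lies weakly above the homogeneous thread precisely in this case. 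I would then show this is the condition for the $X$-thread's segment to be $\mathbb{Z}$-indexed: when $\mu(X_{R_X})<\mu(H_1)$ the segment, descending, enters the take-off region and ends at a measure with no direct predecessor, whereas when $\mu(X_{R_X})\ge\mu(H_1)$ the descent never reaches such a measure. Consequently the $\mathbb{Z}$-indexed segments inject into the set of exceptional quasi-simples $X$ with $\mu(X_{R_X})\ge\mu(H_1)$, which has cardinality $a$, and the bound follows.
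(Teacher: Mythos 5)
Your overall architecture (attach the upward tail of each central segment to one of $b+1$ ``threads'', then count threads) is the same as the paper's, and your counting at the end and your injection of the $\mathbb{Z}$-indexed segments into the $a$ quasi-simples with $\mu(X_{R_X})\ge\mu(H_1)$ match the paper's arguments. But the load-bearing claim (i) --- that \emph{every} central GR measure of length $\ge N$ lies on one of the $b+1$ threads --- is false, and no choice of $N$ repairs it. There exist central GR measures of arbitrarily large length realized only by preinjective modules, and these lie on no thread. Indeed your own treatment of $\mathbb{Z}$-indexed segments contradicts (i): the descending half of such a segment consists (from some point on) exclusively of measures of preinjective central modules (Lemma \ref{preinj}), whose lengths are unbounded. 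Concretely, in the paper's Example (2) the measures $\mu(M^i)$, $|M^i|=3i+2$, are central, tend to infinity in length, and for $i\ge 2$ are not of the form $\mu(X_j)$ or $\mu(H_j)$ for any quasi-simple $X$ (they all lie strictly below $\mu(X_3)$ while having length exceeding $|X_3|$). So the step ``for $i$ large, $\mu_i$ has length $\ge N$, hence lies on a thread'' does not go through.

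What is actually true, and what the paper proves, is a \emph{dynamical} statement rather than a static classification by length: starting from any central measure and iterating the direct-successor operation, one eventually reaches measures whose class $\mathcal{A}(\mu_j)$ consists only of regular modules (Lemmas \ref{regular1} and \ref{regular2}, which rest on Lemma \ref{startwith} and the finiteness of preinjectives below $\mu(H_1)$), and only \emph{then} does Proposition \ref{ds} force the tail onto a thread. Your claim (ii) (consecutive thread measures are direct predecessor/successor pairs for large index) is essentially Propositions \ref{bigprop}(2) and \ref{ds} and is fine, but to complete your argument you must replace (i) by this forward-orbit statement; as written, the classification you ``expect to be the principal obstacle'' is not merely hard, it is impossible. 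A secondary, minor point: your claimed equivalence ``the $X$-thread's segment is $\mathbb{Z}$-indexed iff $\mu(X_{R_X})\ge\mu(H_1)$'' overreaches --- only the forward implication is needed for the bound by $a$, and the converse fails in general (e.g.\ sink--source orientations of $\widetilde{\mathbb{A}}_n$ admit no $\mathbb{Z}$-indexed segments at all).
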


The direct successors of the GR measures of regular modules will be described in  Section \ref{reg}.
Section \ref{seq} is devoted to a discussion of the structure of $\mathbb{N}$- and $\mathbb{Z}$-indexed
GR segments and a proof of the main theorem.

\section{Direct successors of GR measures of regular modules}\label{reg}

In this section, we  study the direct successors
of $\mu(X_i)$, where $X$ is a quasi-simple module and $i$ large enough.
The results in the section were first shown for quivers of type $\widetilde{\mathbb{A}}_n$
in \cite{Ch4} and claimed  being true for all tame quivers. We include the proofs
for the convenience for later discussion.
Throughout this section, we fix a tame quiver $Q$.

We collect some known facts in the following proposition, which will
be quite often used in our later discussion. The proofs can be found
in \cite{Ch3}.
\begin{prop}\label{bigprop}
\begin{itemize}

    \item[(1)] If $M$ is an indecomposable preprojective module, then $M$ is a take-off module and $\mu(M)<\mu(H_1)$.
    \item[(2)] Let $H_1$ be a homogeneous quasi-simple module. Then $\mu(H_1)$ is a central measure and $\mu(H_{i+1})$ is a direct successor
             of $\mu(H_i)$ for each $i\geq 1$. Moreover, there are only finitely many indecomposable preinjective modules
              $M$ with $\mu(M)<\mu(H_1)$.
    \item[(3)]  Let $X$ be quasi-simple and $T$  be a GR submodule of
               $X_i$ for some $i\geq 1$. Then $T$ is either preprojective or $T\cong X_{i-1}$.
    \item[(4)] Let $X$ be a quasi-simple module.
               \begin{itemize}
                   \item[a)] If $\mu(X_r)< \mu(H_1)$, then
                              $\mu(X_i)<\mu(H_j)$ for all $i\geq 1$ and $j\geq 1$.
                   \item[b)] If $\mu(X_r)\geq \mu(H_1)$, then $X_{i-1}$
                              is the unique (up to isomorphism) GR
                              submodule of $X_{i}$ for every $i\geq r$. If, in addition, $r>1$, then                              $\mu(X_i)>\mu(H_j)$ for all $i>r$ and $j\geq
                              1$.
                \end{itemize}
    \item[(5)] Let $M$ be preinjective, which is not in take-off part. If $X_i$ is
               a GR submodule of $M$ for some quasi-simple module $X$.  Then $\mu(M)>
                \mu(X_j)$ for all $j\geq 1$.
\end{itemize}
\end{prop}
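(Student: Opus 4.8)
The plan is to treat the five assertions not independently but as a cascade: I would prove the purely module-theoretic statement (3) first and feed it, together with Ringel's general Gabriel--Roiter machinery, into the measure comparisons (1), (2), (4) and (5). The two structural inputs I would use repeatedly are the trichotomy of $\mod\Lambda$ into preprojective, regular and preinjective modules, with $\Hom(\mathcal I,\mathcal R)=\Hom(\mathcal I,\mathcal P)=\Hom(\mathcal R,\mathcal P)=0$ and the additive defect $\partial$ (negative on preprojectives, zero on regulars, positive on preinjectives); and Ringel's Main Property of GR inclusions \cite{R2}, that the cokernel of a GR inclusion is again indecomposable. Lengths inside a tube of rank $R_X=r$ are bookkept through $\udim X_{r}=\delta$, so that $|X_{mr+s}|=m|\delta|+|X_s|$ and $|H_i|=i|\delta|$; almost every comparison below reduces to locating, via Lemma \ref{basic}, the first place at which two such measures differ.

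For (3) I would classify the indecomposable submodules of $X_i$. Since a preinjective cannot map nonzero to a regular, every indecomposable submodule of $X_i$ is preprojective or regular; and a regular submodule has quasi-socle $X$, so inside the tube the regular submodules form the chain $X_1\subset X_2\subset\cdots\subset X_{i-1}\subset X_i$, along which the GR measure is \emph{strictly} increasing (a proper indecomposable submodule $U\subsetneq M$ has $\mu(U)<\mu(M)$, since the maximal elements are $|U|<|M|$). Hence among the regular proper submodules only $X_{i-1}$ can carry the maximal measure, and a GR submodule $T$ of $X_i$ is therefore either preprojective or isomorphic to $X_{i-1}$. Combined with the Main Property this says each $X_i$ is built from its predecessor by adjoining a single length, which is what drives the successor statements.

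I would then settle (2). That $\mu(H_1)$ is central follows because the homogeneous tubes form an infinite family, all of whose quasi-simples have dimension vector $\delta$ and the same GR measure, so that $\mathcal A(\mu(H_1))$ is infinite; for a tame quiver each take-off and each landing measure is realized by only finitely many indecomposables, so $\mu(H_1)$ must be central. Applying (3) with $r=1$ gives $\mu(H_{i+1})=\mu(H_i)\cup\{(i+1)|\delta|\}$, whence by Lemma \ref{basic} any GR measure $J$ with $\mu(H_i)<J<\mu(H_{i+1})$ would start with $\mu(H_i)$ and satisfy $|J|>(i+1)|\delta|$; directness amounts to excluding such a $J$, i.e.\ excluding an indecomposable with $H_i$ as a GR submodule and of length exceeding $(i+1)|\delta|$. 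I expect this exclusion to be the main obstacle, and I would resolve it with the Main Property: the cokernel of such a GR inclusion is indecomposable and, by defect and the periodic length pattern of the tube, forces the length to be exactly $(i+1)|\delta|$, leaving $H_{i+1}$ as the only option. Finiteness of the preinjectives below $\mu(H_1)$ follows since a preinjective has positive defect and the landing part, by Theorem \ref{partition}, absorbs all but finitely many preinjectives.

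For (1) I would first prove $\mu(P)<\mu(H_1)$ for every preprojective $P$ by induction along the preprojective component, using that a proper submodule of $P$ is again preprojective and comparing successive measures through the Main Property; that $P$ is then a take-off module reduces to showing $\mu(H_1)$ is the smallest central measure, equivalently that only finitely many GR measures lie below any fixed $\mu(P)$. The dichotomy (4) is handled by (3) and the same length bookkeeping: if $\mu(X_r)<\mu(H_1)$ the whole ray stays below $\mu(H_1)\le\mu(H_j)$, giving (4a); if $\mu(X_r)\ge\mu(H_1)$ then, since $H_1$ is generic and so realizes the largest preprojective GR-submodule measure among modules of dimension $\delta$, no preprojective submodule can beat $X_{i-1}$, so $X_{i-1}$ is the unique GR submodule for $i\ge r$, and when $r>1$ the intermediate lengths $|X_{mr+s}|$ (not multiples of $|\delta|$) insert an extra element into $\mu(X_i)$ before $\mu(H_j)$ can catch up, yielding $\mu(X_i)>\mu(H_j)$ by Lemma \ref{basic}. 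Finally, for (5) the Main Property makes $M/X_i$ indecomposable, and additivity of $\partial$ makes it preinjective; the crux is the length bound $|M|<|X_{i+1}|$, which I would extract from the GR-maximality of $X_i$ (a longer $M$ would contain a regular submodule of measure exceeding $\mu(X_i)$), after which $\mu(M)=\mu(X_i)\cup\{|M|\}>\mu(X_j)$ for all $j$ is immediate from Lemma \ref{basic}. The recurring hard kernel in (2), (4b) and (5) is the same: converting ``GR-maximal submodule'' into a sharp length bound that rules out intermediate indecomposables, for which the Main Property together with the periodic length pattern of the tube is the decisive tool.
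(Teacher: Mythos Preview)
The paper does not prove this proposition at all: the sentence preceding it reads ``The proofs can be found in \cite{Ch3}'', and no argument is given in the present paper. So there is no in-paper proof to compare against; your sketch is an attempt to reconstruct what is done in the earlier reference. Your overall architecture --- establish (3) first via the submodule structure of a tube, then feed it together with Ringel's Main Property and length bookkeeping into the remaining parts --- is the natural route and almost certainly the one taken in \cite{Ch3}.

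That said, several of your reductions are not yet watertight. In (1) you write that ``$P$ is a take-off module reduces to showing $\mu(H_1)$ is the smallest central measure''; but $\mu(H_1)$ need \emph{not} be the smallest central measure (there can be preinjective, and by (4a) even regular, central modules with measure below $\mu(H_1)$), so this equivalence is false as stated --- you really do have to show directly that only finitely many GR measures lie below $\mu(P)$. In (2), the finiteness of preinjectives with $\mu(M)<\mu(H_1)$ does not follow from Theorem~\ref{partition} in the way you suggest: the landing part does not ``absorb all but finitely many preinjectives'' (indeed Lemma~\ref{preinj} later exhibits infinitely many preinjective \emph{central} modules). In (5), the key inequality $|M|<|X_{i+1}|$ cannot be obtained by saying ``a longer $M$ would contain a regular submodule of measure exceeding $\mu(X_i)$'', since there is no a priori reason a preinjective $M$ of large length must contain $X_{i+1}$; and even granting $|M|<|X_{i+1}|$, the conclusion $\mu(M)>\mu(X_j)$ for all $j$ still needs the case where $X_i$ is \emph{not} the GR submodule of $X_{i+1}$ to be handled separately. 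These are the places where your cascade would need genuine additional input beyond the Main Property and Lemma~\ref{basic}.
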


\begin{lemm}\label{2delta} Let $X,Y$ be quasi-simple modules with rank $r$ and $s$,
respectively. Assume that $\mu(X_r)\geq \mu(H_1)$.
\begin{itemize}
       \item[(1)]  If $\mu(X_r)>\mu(Y_s)$, then $\mu(X_i)>\mu(Y_j)$
                 for all $i\geq r$, $,j\geq 1$.
      \item[(2)]  If $\mu(X_i)=\mu(Y_j)$ for some $i\geq 2r$, then $r=s$
                 and $\mu(X_t)=\mu(Y_t)$ for every $t\geq r$.
       \item[(3)] If $\mu(X_{2r})>\mu(Y_{2s})$, then
                 $\mu(X_i)>\mu(Y_j)$ for all $i\geq 2r, j\geq 1$.
\end{itemize}
\end{lemm}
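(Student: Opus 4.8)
The plan is to exploit the hypothesis $\mu(X_r)\geq\mu(H_1)$ together with the structural facts in Proposition \ref{bigprop}, especially part (4b), which tells us that once we are in the regular ``large'' range the GR submodule of $X_i$ is forced to be $X_{i-1}$. The key quantitative input is that $\udim X_{kr} = k\delta$, so $|X_{kr}| = k|\delta|$; more generally the lengths $|X_i|$ grow linearly in $i$ with a controlled periodic correction, and after one period the length increases by exactly $|\delta|$. This gives a uniform way to compare lengths of modules over $X$ and over $Y$ for large quasi-length.

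For part (1): assume $\mu(X_r)>\mu(Y_s)$. Since $\mu(X_r)\geq\mu(H_1)$ and $X_{i-1}$ is the GR submodule of $X_i$ for all $i\geq r$ (Proposition \ref{bigprop}(4b)), the GR measure of $X_i$ is built up as $\mu(X_i)=\mu(X_r)\cup\{|X_{r+1}|\}\cup\dots\cup\{|X_i|\}$, so $\mu(X_i)$ starts with $\mu(X_r)$. First I would treat the case where $\mu(Y_s)<\mu(H_1)$: then Proposition \ref{bigprop}(4) forces $\mu(Y_j)<\mu(H_1)\leq\mu(X_r)\leq\mu(X_i)$ for all $j$ and we are done. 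So assume $\mu(Y_s)\geq\mu(H_1)$ as well; then symmetrically $\mu(Y_j)$ starts with $\mu(Y_s)$ for all $j\geq s$, and for $j<s$ one checks $\mu(Y_j)$ is either preprojective (hence $<\mu(H_1)$) or can be compared directly. Now compare $\mu(X_i)$ with $\mu(Y_j)$: since $\mu(X_i)$ starts with $\mu(X_r)$ and $\mu(Y_j)$ starts with $\mu(Y_s)$, and $\mu(X_r)>\mu(Y_s)$, the smallest element distinguishing $\mu(X_i)$ from $\mu(Y_j)$ already lies within the ``$\mu(X_r)$ versus $\mu(Y_s)$'' comparison (using Lemma \ref{basic}(1) to propagate the ``starts with'' relation), so $\mu(X_i)>\mu(Y_j)$. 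Care is needed when $j$ is small or when $\mu(Y_s)$ itself starts with $\mu(X_r)$ or conversely; these boundary cases are handled by noting that $\mu(X_r)>\mu(Y_s)$ rules out $\mu(X_r)$ being an initial segment of $\mu(Y_s)$ unless the strict inequality is witnessed later, and then $|X_i|$ growing without bound eventually exceeds any fixed $|Y_j|$.

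For part (2): suppose $\mu(X_i)=\mu(Y_j)$ with $i\geq 2r$. By (4b) the GR submodule of $X_i$ is $X_{i-1}$, and by the same token (applying $\mu(Y_j)=\mu(X_i)\geq\mu(X_r)\geq\mu(H_1)$ and uniqueness of GR submodules) the GR submodule of $Y_j$ is $Y_{j-1}$; matching GR measures forces $\mu(X_{i-1})=\mu(Y_{j-1})$, and descending we get $\mu(X_{i-t})=\mu(Y_{j-t})$ as long as we stay in the range where (4b) applies, i.e. down to $X_r$. Comparing lengths, $|X_i|=|Y_j|$; using $|X_{i}|$ and $|Y_j|$ and the fact that both hit multiples of $|\delta|$ periodically, together with the fact that $i\geq 2r$ leaves at least a full period of room, forces $r=s$ (the periods must agree) and then $i=j$ shifted appropriately, giving $\mu(X_t)=\mu(Y_t)$ for all $t\geq r$. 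Part (3) is then the $2r$, $2s$ version of part (1): if $\mu(X_{2r})>\mu(Y_{2s})$, since $2r\geq r$ we have $\mu(X_{2r})\geq\mu(H_1)$ and part (1) applied with $2r$ in place of $r$ gives $\mu(X_i)>\mu(Y_j)$ for all $i\geq 2r$, $j\geq 1$ — one just has to check the hypothesis of (1) is met, which it is since $\mu(X_{2r})\geq\mu(X_r)\geq\mu(H_1)$.

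The main obstacle I anticipate is part (2): getting from the equality of GR measures to the conclusion $r=s$ cleanly. The subtlety is that a priori two quasi-simple modules of different ranks could have GR measures that coincide in a long initial segment; ruling this out requires using the exact periodicity $|X_{i+r}|=|X_i|+|\delta|$ and the requirement $i\geq 2r$ (which guarantees the equality of GR measures persists through a whole period, long enough to read off the period length $r$ from the measure itself). Pinning down precisely why $i\geq 2r$ — rather than $i\geq r$ — is needed, and making the length-counting argument rigorous, is where the real work lies; parts (1) and (3) are comparatively routine once the ``starts with'' machinery from Lemma \ref{basic} and Proposition \ref{bigprop}(4) is in place.
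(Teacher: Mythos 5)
Your plan is correct and follows essentially the same route as the paper: everything reduces to the facts that $|X_r|=|Y_s|=|\delta|$ and $|X_{2r}|=|Y_{2s}|=2|\delta|$, that $\mu(X_i)$ starts with $\mu(X_r)$ for $i\geq r$ by Proposition \ref{bigprop}(4b), and that all elements added beyond $\mu(X_r)$ (resp.\ $\mu(Y_s)$) exceed $|\delta|$, so the comparison in (1) and (3) is decided inside $[1,|\delta|]$ and the boundary cases you worry about do not arise. The step in (2) that you flag as the real work is in fact immediate from your own ``key quantitative input'': matching the ordered lists of elements above $|\delta|$ in $\mu(X_i)=\mu(Y_j)$ gives $|X_{r+l}|=|Y_{s+l}|$ for all $l$, and since $2|\delta|$ occurs as the $r$-th such element on one side and the $s$-th on the other (this is exactly where $i\geq 2r$ is used, to guarantee $2|\delta|$ appears at all), one gets $r=s$, after which periodicity of the length increments yields $\mu(X_t)=\mu(Y_t)$ for all $t\geq r$.
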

\begin{proof}(1) If $\mu(Y_s)<\mu(H_1)$, then $\mu(Y_j)<\mu(H_1)$ for
all $j\geq 1$.  Thus we may assume that $\mu(Y_s)\geq\mu(H_1)$.
Since for each $j\geq s$, $\mu(Y_j)$ starts with $\mu(Y_s)$  and
$|Y_s|=|X_r|=|\delta|$, we have $\mu(X_r)>\mu(Y_j)$.

(2) It is clear that $r=1$ if and only if $s=1$. Now we assume $r>1$.
Since $\mu(X_r)\geq \mu(H_1)$, we have $\mu(Y_s)\geq \mu(H_1)$. Thus
$j\geq 2s$ and
$$\begin{array}{rclll}
\mu(Y_{j})&=&\mu(Y_s)\cup\{|Y_{s+1}|,|Y_{s+2}|,\ldots,|Y_{2s}|,|Y_{2s+1}|,\ldots,|Y_{j}|\}&&\\
&=&\mu(X_r)\cup\{|X_{r+1}|,|X_{r+2}|,\ldots,|X_{2r}|,|X_{2r+1}|,\ldots,|X_{i}|\}&=&\mu(X_i).
\end{array}$$
Because $|X_r|=|Y_s|=|\delta|$ and $|X_{2r}|=|Y_{2s}|=2|\delta|$, we
obtain that $r=s$, $\mu(X_r)=\mu(Y_s)$ and
$\mu(X_{2r})=\mu(Y_{2s})$. Note that
$$|X_{r+l}|-|X_{r+l-1}|=|Y_{r+l}|-|Y_{r+l-1}|$$ for all $l\geq 1$.
It follows $\mu(X_t)=\mu(Y_t)$ for all $t\geq r=s$.

(3) follows similarly.
\end{proof}

\begin{coro}\label{ds1}Let $X$ be a quasi-simple module of rank $r$ such that
$\mu(X_r)\geq \mu(H_1)$. If $M$ is an indecomposable module such
that $\mu(M)=\mu(X_i)$ for some $i\geq 2r$, then $M$ is a regular
module.
\end{coro}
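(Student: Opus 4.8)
The plan is to show separately that $M$ cannot be preprojective and cannot be preinjective; the work is entirely in the second case.

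First I would check that $M$ is not preprojective. Since $i>r$ and $\mu(X_r)\ge\mu(H_1)$, Proposition \ref{bigprop}(4)(b) gives that $X_{\ell-1}$ is the unique GR submodule of $X_\ell$ for every $\ell\ge r$, so $\mu(X_\ell)=\mu(X_{\ell-1})\cup\{|X_\ell|\}$, and hence $\mu(X_i)$ starts with $\mu(X_r)$; in particular $\mu(M)=\mu(X_i)\ge\mu(X_r)\ge\mu(H_1)$. By Proposition \ref{bigprop}(1) every preprojective module, and more generally every take-off module, has GR measure strictly below $\mu(H_1)$ (here using that $\mu(H_1)$ is a central measure, Proposition \ref{bigprop}(2)). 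So $M$ is neither preprojective nor a take-off module.

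Next I would suppose $M$ is preinjective and derive a contradiction. By the previous paragraph $M$ is preinjective but not in the take-off part. Taking a GR filtration $M_1\subset\cdots\subset M_t=M$, the GR submodule $T=M_{t-1}$ satisfies $\mu(T)=\mu(X_{i-1})$ (compare $\mu(M)=\mu(T)\cup\{|M|\}$ with $\mu(X_i)=\mu(X_{i-1})\cup\{|X_i|\}$), and $\mu(T)\ge\mu(X_r)\ge\mu(H_1)$ since $i-1\ge r$, so $T$ is not preprojective. If $T$ is regular, write $T=Z_s$ for a quasi-simple $Z$; then $|Z_s|=|X_{i-1}|\ge|\delta|$ forces $s\ge R_Z$. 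For $i\ge 2r+1$ the measure $\mu(X_{i-1})$ contains the element $|X_{2r}|=2|\delta|$, whence $s\ge 2R_Z$, and Lemma \ref{2delta}(2) then gives $R_Z=r$ and $\mu(Z_\ell)=\mu(X_\ell)$ for all $\ell\ge r$, so $s=i-1$ and $\mu(Z_i)=\mu(X_i)=\mu(M)$; but Proposition \ref{bigprop}(5), applied to the preinjective non-take-off module $M$ with regular GR submodule $Z_s$, yields $\mu(M)>\mu(Z_i)=\mu(M)$, a contradiction. The borderline case $i=2r$ uses the same inputs: there $|X_{2r-1}|<2|\delta|$ forces $s<2R_Z$, so $|Z_{s+1}|\le|Z_{2R_Z}|=2|\delta|=|M|$, whereas Proposition \ref{bigprop}(5) together with $\mu(M)=\mu(Z_s)\cup\{|M|\}$ and $\mu(Z_{s+1})=\mu(Z_s)\cup\{|Z_{s+1}|\}$ forces $|M|<|Z_{s+1}|$.

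Finally I would exclude the possibility that $T$ is preinjective. Here the idea is to iterate: $T$ is again preinjective, outside the take-off part, with $\mu(T)=\mu(X_{i-1})$, so one descends the GR filtration of $M$, identifying each successive GR measure with $\mu(X_{i-k})$ via Proposition \ref{bigprop}(4)(b) as long as $i-k\ge r$; the descent must eventually hit either a regular GR submodule (then the previous paragraph applies) or a module whose GR measure has dropped below $\mu(H_1)$, which therefore lies in the take-off part, so that its direct successor --- which by Lemma \ref{basic}(2) has length larger than that of the module one step above it --- is again a take-off measure, contradicting that the module one step above is a preinjective module outside the take-off part. I expect this last step, reconciling the preinjective descent with the structure of the take-off part, to be the main obstacle; it is also exactly where the hypothesis $i\ge 2r$ is really used, both in making the filtration long enough and in allowing Lemma \ref{2delta}(2) to be invoked with index $\ge 2R_Z$ once a regular submodule surfaces.
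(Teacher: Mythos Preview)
Your treatment of the case where the GR submodule $T$ is regular is essentially the same as the paper's, and is correct (the equality $\mu(Z_{s+1})=\mu(Z_s)\cup\{|Z_{s+1}|\}$ in the borderline case is not quite justified, but only the inequality $\mu(Z_s)<\mu(Z_{s+1})<\mu(M)$ together with Lemma~\ref{basic}(2) is needed, so this is harmless). Your separate handling of $i=2r$ is even a bit more careful than the paper, which simply asserts $|M|\neq 2|\delta|$ for preinjective $M$.

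The genuine gap is in your last paragraph. The iteration you sketch does not close. If you descend the GR filtration and first meet a regular term $M_{t-k-1}=Z_s$ at a level where the corresponding index $i-k$ has already dropped below $2r$, then ``the previous paragraph'' does not apply: your argument for a regular GR submodule used either Lemma~\ref{2delta}(2) (which needs index $\geq 2r$) or the borderline trick $|Z_{s+1}|\le 2|\delta|=|M|$ (which needs the ambient module to have length exactly $2|\delta|$). For $r<i-k<2r$ neither is available, and from $\mu(Z_s)<\mu(Z_{s+1})<\mu(M_{t-k})$ you only get $|Z_{s+1}|>|X_{i-k}|$, which is no contradiction. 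Your alternative branch, ``the measure drops below $\mu(H_1)$ and hence into the take-off part'', is also not justified: a GR measure below $\mu(H_1)$ need not be a take-off measure, so the conclusion about the direct successor does not follow. You yourself flag this step as ``the main obstacle'', and it really is one.

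The paper avoids the whole iteration by a minimal-counterexample trick: choose $M$ preinjective of \emph{smallest length} with $\mu(M)=\mu(X_i)$ for some $i\ge 2r$. Then the GR submodule $T$ has $\mu(T)=\mu(X_{i-1})$ with $i-1\ge 2r$ (the paper rules out $i=2r$ via $|M|\neq 2|\delta|$; your borderline argument would serve equally well here), and by minimality $T$ cannot be preinjective, hence is regular. Now one is immediately in the case you already handled, and Lemma~\ref{2delta}(2) together with Proposition~\ref{bigprop}(5) gives $\mu(Y_{t+1})=\mu(X_i)=\mu(M)$ while also $|Y_{t+1}|>|M|$, the desired contradiction. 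Replacing your descent by this minimality device removes the obstacle entirely.
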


\begin{proof}
For the purpose of a contradiction, let $M$ be an indecomposable preinjective
module with $|M|$  minimal such that
$\mu(M)=\mu(X_i)$ for some $i\geq 2r$.   Note that $i-1\geq 2r$, since
$|M|\neq 2|\delta|$.
Let $T$ be a GR submodule of $M$. Then $\mu(T)=\mu(X_{i-1})>\mu(H_1)$.
By the minimality of $|M|$, $T$ is regular, say $T=Y_t$ for
some quasi-simple module $Y$ of rank $s$.
Then $\mu(M)>\mu(Y_j)$ for all $j\geq 1$ by Proposition
\ref{bigprop}(5).  Thus $Y\ncong X$ and $t\geq 2s$ since
$|M|=|X_i|>2|\delta|$. It follows that $\mu(Y_s)\geq \mu(H_1)$.
Notice that $\mu(Y_t)=\mu(X_{i-1})$. Therefore, $r=s$ and
$\mu(Y_{t+1})=\mu(X_i)$ by Lemma \ref{2delta} which implies $|Y_{t+1}|=|X_i|=|M|$.
On the other hand, it is easily seen that
$|Y_{t+1}|>|M|$. This is a contradiction.
\end{proof}

\begin{prop}\label{ds} Let $X$ be a quasi-simple module of rank $r>1$.
\begin{itemize}
\item[(1)] If $\mu(X_r)\geq \mu(H_1)$. Then $\mu(X_{j+1})$ is a direct successor
of $\mu(X_{j})$ for each $j\geq 2r$.
\item[(2)] If $\mu(X_r)<\mu(H_1)$ and if there is an $i\geq 1$ such that $X_i$ is a
central module. Then there is an $i_0\geq i$ such that
$\mu(X_{j+1})$ is a direct successor of $\mu(X_j)$ for each $j\geq
i_0$.
\end{itemize}
\end{prop}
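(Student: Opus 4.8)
The plan is to reduce both statements to showing that, for $j$ large enough, any indecomposable module $M$ with $\mu(X_j) < \mu(M)$ must satisfy $\mu(X_{j+1}) \leq \mu(M)$; equivalently, no GR measure can be squeezed strictly between $\mu(X_j)$ and $\mu(X_{j+1})$. Since $\mu(X_{j+1}) = \mu(X_j) \cup \{|X_{j+1}|\}$ once $X_j$ is a GR submodule of $X_{j+1}$ (which holds for $j \geq r$ in case (1) by Proposition \ref{bigprop}(4b), and for $j \geq i$ in case (2) since $X_j$ central forces $X_{j-1}$ to be the GR submodule by Proposition \ref{bigprop}(3) together with $\mu(X_j) \geq \mu(H_1)$ is false here — so I must instead argue that in the central range the GR submodule of $X_{j+1}$ is $X_j$ for $j \geq i_0$), the key point in both cases is that $\mu(X_{j+1})$ really is $\mu(X_j) \cup \{|X_{j+1}|\}$, and then to rule out intermediate measures using Lemma \ref{basic}(2): any GR measure $I'$ with $\mu(X_j) < I' < \mu(X_{j+1}) = \mu(X_j) \cup \{|X_{j+1}|\}$ would have to satisfy $|I'| > |X_{j+1}|$.

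For part (1), I would proceed as follows. Fix $j \geq 2r$. By Proposition \ref{bigprop}(4b), $X_j$ is the unique GR submodule of $X_{j+1}$, so $\mu(X_{j+1}) = \mu(X_j) \cup \{|X_{j+1}|\}$, and moreover $\mu(X_j) > \mu(H_j)$ for all $j' \geq 1$ (using $r > 1$). Suppose for contradiction there is a GR measure $I'$ with $\mu(X_j) < I' < \mu(X_{j+1})$, realised by an indecomposable $M$. By Lemma \ref{basic}(2), $|M| = |I'| > |X_{j+1}| > 2|\delta|$, and $M$ cannot be preprojective (those are take-off, with measure $< \mu(H_1) \leq \mu(X_r) \leq \mu(X_j)$). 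By Corollary \ref{ds1}, $M$ is not preinjective either — wait, Corollary \ref{ds1} says that an indecomposable with measure exactly $\mu(X_i)$, $i \geq 2r$, is regular; here I instead need that $M$ with $\mu(X_j) < \mu(M) < \mu(X_{j+1})$ is regular, which I would establish by the same minimal-counterexample argument as in Corollary \ref{ds1}: take $M$ preinjective of minimal length with a measure strictly between, pass to a GR submodule $T$ of $M$, note $\mu(T) \geq \mu(X_j) \geq \mu(H_1)$ so $\mu(T)$ is a regular measure — and by minimality $T$ itself is regular, $T = Y_t$; then Proposition \ref{bigprop}(5) gives $\mu(M) > \mu(Y_{j'})$ for all $j'$, forcing $t \geq 2s$, then Lemma \ref{2delta}(2) comparing $\mu(Y_t)$ with the nearby $\mu(X_\bullet)$ pins down lengths and yields the same length contradiction as in Corollary \ref{ds1}. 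Once $M$ is regular, $M = Y_t$ for some quasi-simple $Y$ of rank $s$; since $\mu(M) > \mu(X_j) \geq \mu(H_1)$ we get $\mu(Y_s) \geq \mu(H_1)$ and $t \geq 2s$ (as $|M| > 2|\delta|$); then Lemma \ref{2delta}, applied to the chain $\mu(X_j) < \mu(Y_t) < \mu(X_{j+1})$, forces $\mu(Y_t)$ to coincide with one of $\mu(X_j), \mu(X_{j+1})$ — contradicting strictness. This closes part (1).

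For part (2), the extra work is to locate $i_0$. Here $\mu(X_r) < \mu(H_1)$, so by Proposition \ref{bigprop}(4a) all $\mu(X_i) < \mu(H_{j'})$ for all $i, j'$; the hypothesis that some $X_i$ is central means $\mu(X_i)$ is neither take-off nor landing. I would first argue that the $\mu(X_j)$ are eventually strictly increasing central measures with $\mu(X_{j+1}) = \mu(X_j) \cup \{|X_{j+1}|\}$: by Proposition \ref{bigprop}(3), a GR submodule of $X_{j+1}$ is either preprojective or $X_j$; since $X_j$ is eventually central (its measure lies strictly between fixed take-off and landing bounds once $j \geq$ some $i_1$, because the take-off part is "below $\mu(H_1)$" in a controlled way and the $X_j$ approach the homogeneous behaviour — concretely, $\mu(X_j)$ starts with $\mu(X_{jr})$ whose length is a multiple of $|\delta|$), the GR submodule stabilises to $X_j$ for $j \geq i_0$ for a suitable $i_0 \geq i$. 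Then the same "no intermediate measure" argument as in part (1) applies, except that now I cannot invoke Corollary \ref{ds1} directly; instead, for $\mu(X_j) < I' < \mu(X_{j+1})$ realised by $M$, I use Lemma \ref{basic}(2) to get $|M| > |X_{j+1}|$, note $M$ is not preprojective (measure too small is not the issue — rather a preprojective has measure $< \mu(H_1)$ but so does $\mu(X_{j+1})$, so this needs the finer point that among take-off modules the measures form an initial segment, so if $\mu(X_j)$ is already central then nothing with a larger measure is take-off), hence $M$ is central or landing; a landing module $M$ has a GR submodule which, being large, is itself central or landing, and iterating leads back into the regular modules via Proposition \ref{bigprop}(5), after which Lemma \ref{2delta}(3) (the "$2|\delta|$" version) forces a coincidence, contradiction.

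The main obstacle I anticipate is part (2): unlike part (1), there is no clean analogue of Corollary \ref{ds1} available, so ruling out preinjective/landing modules with a measure strictly between $\mu(X_j)$ and $\mu(X_{j+1})$ requires re-running the minimal-counterexample machinery of Corollary \ref{ds1} in this central range, and also requires a careful choice of $i_0$ ensuring simultaneously that (a) $X_j$ is central, (b) $X_j$ is the GR submodule of $X_{j+1}$, and (c) the length bound $|X_{j+1}| > 2|\delta|$ is available so that Lemma \ref{2delta}(3) can be applied. Making (a) precise — showing the $\mu(X_j)$ do not slip back into the take-off part for large $j$ — is where I expect to spend the most effort, and I would handle it by using that $\mu(X_j)$ starts with $\mu(X_{\lfloor j/r\rfloor r})$, whose maximal element grows without bound, while the take-off measures have maximal element bounded by a quantity comparable to $|\delta|$ (from Proposition \ref{bigprop}(2), only finitely many preinjectives, hence by symmetry-type arguments only a bounded "height" in the take-off part relative to $\mu(H_1)$).
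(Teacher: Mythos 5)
Your part (1) is essentially the paper's own argument: a minimal-length counterexample among regular modules, resolved by Lemma \ref{2delta}, followed by a rerun of the Corollary \ref{ds1} machinery to exclude preinjective modules with a measure strictly between $\mu(X_j)$ and $\mu(X_{j+1})$. That half is fine.

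Part (2) has a genuine gap. Your plan there is to ``force a coincidence'' via Lemma \ref{2delta}(2) and (3), but that lemma carries the standing hypothesis $\mu(X_r)\geq\mu(H_1)$, which is exactly what fails in case (2), where $\mu(X_r)<\mu(H_1)$. (Its proof really uses this: it needs $|Y_s|=|X_r|=|\delta|$ to sit inside comparable measures starting at the rank, which breaks when the GR submodules in the low range can be preprojective.) So the tool you lean on to exclude intermediate regular measures is simply unavailable in this regime, and no uniform ``no regular measure in between'' statement holds for a fixed $j$; one must instead show interference dies out as $j\to\infty$. The paper's actual argument is different: if $\mu(X_j)<\mu(Y_l)<\mu(X_{j+1})$ with $X_j\subset X_{j+1}$ and $Y_l\subset Y_{l+1}$ both GR inclusions of central modules, then comparing lengths via Lemma \ref{basic}(2) forces $\mu(Y_h)<\mu(X_{j+1})$ for \emph{all} $h$, i.e.\ the entire $Y$-family is trapped below $\mu(X_{j+1})$ and can never interfere again for larger $j$; since only the finitely many exceptional quasi-simple families $Z$ with $\mu(Z_{R_Z})\leq\mu(H_1)$ can interfere at all (the others have all measures above $\mu(H_1)>\mu(X_j)$), one may choose $i_0$ past all interference. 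This idea is absent from your proposal. Separately, your ``main obstacle (a)'' --- that the $X_j$ might slip back into the take-off part --- is a non-issue: $X_i\subset X_j$ gives $\mu(X_j)\geq\mu(X_i)$, and every non-take-off measure exceeds every take-off measure by Theorem \ref{partition}, so all $X_j$ with $j\geq i$ are central; the length-versus-$|\delta|$ argument you sketch for this is both unnecessary and not correct as stated. Finally, excluding preinjectives in case (2) needs no new machinery: all $\mu(X_j)<\mu(H_1)$ by Proposition \ref{bigprop}(4a), and there are only finitely many preinjective indecomposables with measure below $\mu(H_1)$, so they are outrun for $j$ large.
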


\begin{proof}
(1) We first show that there does not exist an
indecomposable regular module $M$ such that $\mu(M)$ lies between
$\mu(X_j)$ and $\mu(X_{j+1})$ for any $j\geq 2r$. For the purpose of
a contradiction, we assume that there exists a $j\geq 2r$ and an
indecomposable regular module $M$ with $|M|$
minimal and $\mu(X_j)<\mu(M)<\mu(X_{j+1})$. Then $|M|>|X_{j+1}|>2|\delta|$, since $X_j$ is a GR
submodule of $X_{j+1}$.   Let $M=Y_t$ for some quasi-simple module
$Y$ of rank $s>1$. It follows that $\mu(Y_s)\geq \mu(H_1)$ and $t>
2s$. Therefore, $Y_{t-1}$ is a GR submodule of $Y_i$ and
$$\mu(Y_{t-1})\leq \mu(X_j)<\mu(M)=\mu(Y_t)<\mu(X_{j+1})$$
by minimality of $|M|$. This implies $\mu(Y_{t-1})=\mu(X_j)$, since
otherwise $|X_j|>|M|>|X_{j+1}|$, which is impossible. Observe
that $t-1\geq 2s$ and $j\geq 2r$.  Then Lemma \ref{2delta} implies
$\mu(X_i)=\mu(Y_i)$ for all $i\geq r=s$. This contradicts the
assumption $\mu(X_j)<\mu(M)=\mu(Y_t)<\mu(X_{j+1})$. Therefore, there
are no indecomposable regular modules $M$ satisfying
$\mu(X_j)<\mu(M)<\mu(X_{j+1})$ for any $j\geq 2r$.

Assume that $M$ is an indecomposable preinjective module such that
$\mu(X_j)<\mu(M)<\mu(X_{j+1})$ with $|M|$ minimal. Let $N$ be a GR
submodule of $M$. Comparing the lengths, we have
$\mu(X_j)\leq\mu(N)$. If $N=Y_h$ is regular for some quasi-simple
module $Y$ of rank $s$, then
$\mu(X_{j+1})>\mu(M)>\mu(Y_{h+1})>\mu(Y_h)\geq\mu(X_j)$. This  contradicts the
first part of the proof. If $N$ is preinjective, then
$\mu(N)=\mu(X_j)$ by the minimality of $|M|$. Thus a GR filtration
of $N$ contains a regular module $Z_{2t}$ for a quasi-simple module $Z$ of
rank $t$. It follows that $\mu(X_{2r})=\mu(Z_{2t})$.  Thus
$\mu(M)>\mu(N)>\mu(Z_{i+1})=\mu(X_{i+1})$, which is a contradiction.

(2)
Since there are only finitely many indecomposable preinjective modules
with GR measures
smaller than $\mu(H_1)$, we may choose $j_0\geq i$ such that
$\mu(X_j)<\mu(M)<\mu(X_{j+1})$ implies
that $M$ is regular  for any $j\geq j_0$.
It is sufficient to
show that there is an $i_0\geq j_0$ such that there does not exist a regular
module with GR measure $\mu$ satisfying $\mu(X_j)<\mu<\mu(X_{j+1})$
for any $j\geq i_0$.

Since $X_i$ is a central module,  $X_j$ is the unique, up to
isomorphism, GR submodule of $X_{j+1}$ for every $j\geq j_0$.
Let $Y$ be a quasi-simple module  of rank $s$ such that
$\mu(X_j)<\mu(Y_l)<\mu(X_{j+1})$ for some $j\geq j_0\geq r$ and $l\geq
1$. In this case, $Y_l$ is a GR submodule  of $Y_{l+1}$ since $Y_l$
is a central module. Comparing the lengths, we have
$\mu(Y_{l+1})<\mu(X_{j+1})$, and similarly $\mu(Y_h)<\mu(X_{j+1})$
for all $h\geq 1$. Now replace $j$ by some $j'>j$ and repeat the
above consideration. Since there are only finitely many quasi-simple
modules $Z$ such that $\mu(Z_{R_Z})\leq \mu(H_1)$, where $R_Z$ is
the rank of $Z$, we may obtain an index $i_0>j_0$ such that a GR measure
$\mu$ of an indecomposable regular module satisfies either
$\mu<\mu(X_{i_0})$ or $\mu>\mu(X_j)$ for all $j\geq 1$.
\end{proof}

\section{The structures of GR-segments}\label{seq}

In this section, we study the structure of the
$\mathbb{N}$- and $\mathbb{Z}$-indexed
GR segments for a fixed tame quiver $Q$.
The main theorem will be also proved in this section.

\subsection{Sequence of direct successors}
Let $\mu_0$ be a central measure and $\mathcal{S}$ be the sequence of GR measures
obtained by taking direct successors starting with $\mu_0$:
$$\mu_0<\mu_1<\mu_2<\mu_3<\mu_4\ldots$$

\begin{lemm}For each $\mu\in\mathcal{S}$, $\mu>I_i$ for all
take-off measures $I_i$.  In particular,
$M$ is not a preprojective module for any $M\in\mathcal{A}(\mu)$.
\end{lemm}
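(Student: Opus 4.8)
The plan is to reduce the statement to the partition of Theorem~\ref{partition} together with Proposition~\ref{bigprop}(1), so the argument is short. First I would use the hypothesis that $\mu_0$ is a \emph{central} measure: by Theorem~\ref{partition} this means precisely that $I_i<\mu_0<I^i$ for every index $i$, in particular $I_i<\mu_0$ for every take-off measure $I_i$. Next I would observe that, by the very definition of $\mathcal{S}$, every $\mu\in\mathcal{S}$ is obtained from $\mu_0$ by repeatedly passing to direct successors, so $\mu\geq\mu_0$; combining this with $\mu_0>I_i$ and transitivity of the total order on $\mathcal{P}(\mathbb{N})$ yields $\mu>I_i$ for every take-off measure $I_i$. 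This establishes the first assertion.

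For the ``in particular'' clause I would argue by contradiction. Suppose some $M\in\mathcal{A}(\mu)$ is preprojective for some $\mu\in\mathcal{S}$. By Proposition~\ref{bigprop}(1) every indecomposable preprojective module is a take-off module, hence $\mu=\mu(M)$ is itself a take-off measure, say $\mu=I_j$. But the first part of the lemma, applied with $I_i=I_j$, gives $\mu>I_j=\mu$, which is absurd. Therefore no $M\in\mathcal{A}(\mu)$ can be preprojective.

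I do not expect a genuine obstacle here; the only points requiring care are the direction of the inequalities in Theorem~\ref{partition} (take-off measures lie strictly below all central measures, landing measures strictly above) and the fact that the sequence $\mathcal{S}$ only moves upward from $\mu_0$, so that monotonicity of the order does all the work. It is worth recording this lemma explicitly, though, because it pins down that any $\mathbb{N}$- or $\mathbb{Z}$-indexed GR segment meeting a central measure consists entirely of central and (possibly) landing measures, never of take-off measures, which is exactly the information needed to separate the ``central part'' from the take-off part in the proof of the main theorem.
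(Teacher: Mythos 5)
Your proof is correct and follows essentially the same route as the paper: the paper's one-line argument likewise rests on the fact that every $\mu\in\mathcal{S}$ lies above $\mu_0$ and hence above all take-off measures (Theorem~\ref{partition}), combined with Proposition~\ref{bigprop}(1) that preprojective indecomposables are take-off modules. You have merely spelled out the steps the paper calls ``straightforward.''
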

\begin{proof}This is straightforward since $\mu$ is not a
take-off measure and all indecomposable preprojective modules are take-off modules
(Proposition \ref{bigprop}).
\end{proof}

\begin{lemm}\label{s=1}
Let $X$ be a quasi-simple module with $\mu(X_s)=\mu_i\in\mathcal{S}$.
Assume that $N$ is an indecomposable regular module with $\mu(N)=\mu_j$
for some $j<i$ such that $\mathcal{A}(\mu_h)$ contains no regular modules for any $j<h<i$.
Then $\mu(N)=\mu(X_{s-1})$. In particular, if $s=1$,
then $\mathcal{A}(\mu_j)$ contains no regular modules for any $j<i$.
\end{lemm}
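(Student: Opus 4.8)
The plan is to chase the GR submodule relations along the sequence $\mathcal{S}$, using Proposition \ref{bigprop}(3) as the main structural input. Let $X$ be quasi-simple with $\mu(X_s)=\mu_i\in\mathcal{S}$, and suppose $N$ is an indecomposable regular module with $\mu(N)=\mu_j$ for some $j<i$, where $\mathcal{A}(\mu_h)$ contains no regular module for any $j<h<i$. The first step is to locate a GR submodule $T$ of $X_s$. By Proposition \ref{bigprop}(3), $T$ is either preprojective or isomorphic to $X_{s-1}$. If $s=1$, then $X_s=X_1=X$ is quasi-simple, so its only proper submodules are preprojective, forcing $T$ to be preprojective; by Proposition \ref{bigprop}(1) such $T$ is a take-off module, so $\mu(T)<\mu(H_1)$. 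Since $T\subset X_s$ and $\mu(X_s)=\mu(T)\cup\{|X_s|\}$, the measure $\mu(T)$ is the direct predecessor of $\mu(X_s)=\mu_i$ inside $\mathcal{S}$, i.e.\ $\mu(T)=\mu_{i-1}$.

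For the main case $s>1$, I would argue that $\mu(X_{s-1})=\mu_{i-1}$, the direct predecessor of $\mu_i$ in $\mathcal{S}$. Since $X_{s-1}\subset X_s$ is a proper indecomposable submodule, $\mu(X_{s-1})<\mu(X_s)=\mu_i$; and since $X_{s-1}$ being a GR submodule means $\mu(X_s)=\mu(X_{s-1})\cup\{|X_s|\}$, Lemma \ref{basic}(2) shows any GR measure strictly between $\mu(X_{s-1})$ and $\mu(X_s)$ would have to be realized by a module longer than $|X_s|$; combined with the fact that $\mu(X_{s-1})$ is actually attained, one checks $\mu(X_{s-1})$ is the direct predecessor $\mu_{i-1}$. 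Now I iterate: if $X_{s-1}$ is still regular of quasi-length $\geq 2$, its GR submodule is $X_{s-2}$ by Proposition \ref{bigprop}(3) again, giving $\mu(X_{s-2})=\mu_{i-2}$, and so on down the chain $X_{s}, X_{s-1}, X_{s-2},\dots$ All of these $X_{s-l}$ for $l=0,1,\dots$ are regular, so by the hypothesis that $\mathcal{A}(\mu_h)$ has no regular module for $j<h<i$, the descent cannot stop at any index strictly between $j$ and $i$; it must reach exactly $\mu_j$. Thus $\mu_j=\mu(X_{s-(i-j)})$, and since $N$ also has GR measure $\mu_j$ with $N$ regular, I need $\mu(N)=\mu(X_{s-1})$ — wait, more precisely the descent must land with $X_{s-(i-j)}$ regular and realizing $\mu_j$; the claim as stated, $\mu(N)=\mu(X_{s-1})$, is the case the lemma singles out (when $N$ appears as the immediate regular predecessor), and the argument shows that once one hits a regular measure the quasi-length has dropped by exactly one at a time, so tracking indices gives the identification.

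The delicate point — and what I expect to be the main obstacle — is controlling the step where the chain of GR submodules $X_s \supset X_{s-1} \supset \cdots$ might try to exit the regular modules: a priori the GR submodule of some $X_{s-l}$ could be preprojective rather than $X_{s-l-1}$, which happens exactly when $X_{s-l}$ is quasi-simple ($l=s-1$). I would handle this by noting that as long as the quasi-length exceeds $1$, Proposition \ref{bigprop}(3) rules out the preprojective alternative (a proper preprojective submodule of $X_{s-l}$ would have measure $<\mu(H_1)$, but $X_{s-l-1}\subset X_{s-l}$ is a longer indecomposable submodule whose measure, still $\geq$ that of the take-off module, would dominate — contradicting GR-maximality unless $X_{s-l-1}$ itself is the GR submodule). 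So the only way to leave the regular range is to reach quasi-length $1$, and then the next measure down is a take-off measure, which cannot equal $\mu_j$ since $\mu_j\in\mathcal{S}$ is central (first Lemma of this subsection). Hence the first regular measure encountered below $\mu_i$ in $\mathcal{S}$ is $\mu(X_{s-1})$, giving $\mu(N)=\mu(X_{s-1})$; and when $s=1$ there is no such regular measure at all below $\mu_i$, proving the final assertion.
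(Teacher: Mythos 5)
Your argument has a genuine gap at its central step. You claim that the GR submodule $T$ of $X_s$ satisfies $\mu(T)=\mu_{i-1}$ (and, iterating, that $\mu(X_{s-1}),\mu(X_{s-2}),\dots$ march down through $\mathcal{S}$ one index at a time). Lemma \ref{basic}(2) does not give this: it only says that any GR measure strictly between $\mu(T)$ and $\mu(X_s)=\mu(T)\cup\{|X_s|\}$ is realized by modules \emph{longer} than $X_s$ --- it does not say no such measure exists. In general the GR submodule's measure is far from being the direct predecessor; indeed the very hypothesis of the lemma allows measures $\mu_h$ with $j<h<i$ realized only by preinjective modules, so $j<i-1$ is possible and your conclusion $\mu(X_{s-1})=\mu_{i-1}$ would then be false (the correct conclusion is $\mu(X_{s-1})=\mu_j$, which is what the lemma asserts). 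Your hand-off at the end (``tracking indices gives the identification'') papers over exactly this mismatch. A second, independent error: you assert that for quasi-length $>1$ the preprojective alternative in Proposition \ref{bigprop}(3) is excluded because a longer indecomposable submodule ``would dominate'' in GR measure. GR measures are not monotone in length, and $X_{s-l-1}$ may itself be a take-off module; both alternatives of Proposition \ref{bigprop}(3) genuinely occur.

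The paper's proof goes differently and you should compare. Write $N=Y_t$ and let $T$ be a GR submodule of $X_s$. One first shows $\mu(T)\le\mu(Y_t)$: if $T$ is preprojective it is take-off, hence below the central measure $\mu_j$; if $T\cong X_{s-1}$ is regular, then $\mu(T)$ cannot lie in $(\mu_j,\mu_i)$ because no $\mathcal{A}(\mu_h)$ there contains a regular module. If the inequality were strict, Lemma \ref{basic}(2) applied to the GR inclusion $T\subset X_s$ gives $|Y_t|>|X_s|$, whence $\mu(Y_t)<\mu(X_s)<\mu(Y_{t+1})$; running the same comparison on a GR submodule of $Y_{t+1}$ yields $|X_s|>|Y_{t+1}|$, contradicting $|Y_t|>|X_s|$. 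Hence $\mu(T)=\mu(Y_t)$, and since $Y_t$ is central, $T$ cannot be preprojective, so $T\cong X_{s-1}$. This two-sided length contradiction is the idea your proposal is missing.
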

\begin{proof}
Assume that $N=Y_t$ for some quasi-simple module $Y$ and $t\geq 1$.
Let $T\subset X_s$ be a GR submodule. Then $T$ is either preprojective,
or isomorphic to $X_{s-1}$.
By the choice of $i$ and $j$ and the fact that $\mathcal{S}$
contains no take-off measure,  we have $\mu(T)\leq \mu(Y_t)<\mu(X_s)$.
If the equality does not hold (for example, $s=1$ and thus $T$ is preprojective),
then $|Y_t|>|X_s|$ by Lemma \ref{basic} since $T$ is a GR submodule of $X_s$ .
It follows from the assumption that $\mu(Y_t)<\mu(X_s)<\mu(Y_{t+1})$.
Again consider a GR submodule of $Y_{t+1}$. Similar to the above situation,
we have $|X_s|>|Y_{t+1}|$, which contradicts $|Y_t|>|X_s|$. Thus $\mu(T)=\mu(Y_t)$
and  $T\cong X_{s-1}$ since $Y_t$ is a central module.
\end{proof}

As a direct consequence of this lemma, we can show the existence of an $\mathbb{N}$-indexed
GR segment which is not the take-off part.

\begin{coro}\label{homo}
A GR segment containing $\mu(H_i)$ is indexed by $\mathbb{N}$.
\end{coro}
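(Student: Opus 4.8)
The plan is to show that the GR segment containing $\mu(H_i)$ is infinite in both directions only if it is $\mathbb{Z}$-indexed, but in fact here we want to conclude it is $\mathbb{N}$-indexed, i.e., it has a first element and hence corresponds to a GR measure admitting no direct predecessor. First I would invoke Proposition~\ref{bigprop}(2), which tells us that $\mu(H_1)$ is a central measure and that $\mu(H_{i+1})$ is a direct successor of $\mu(H_i)$ for every $i\geq 1$; thus the whole ladder $\mu(H_1)<\mu(H_2)<\mu(H_3)<\cdots$ sits inside the GR segment $\mathcal{S}$ containing $\mu(H_i)$. Since a GR segment containing a central measure is, by the observations collected before Section~\ref{reg}, either $\mathbb{N}$- or $\mathbb{Z}$-indexed, it remains to rule out the $\mathbb{Z}$-indexed case, i.e., to show $\mathcal{S}$ cannot be extended infinitely far to the left.

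The key step is to analyze the sequence $\mathcal{S}'$ obtained by taking direct predecessors starting from $\mu(H_1)$ (if any exist) and to show it must terminate. Suppose for contradiction that $\mathcal{S}$ is $\mathbb{Z}$-indexed, so there is an infinite strictly decreasing chain of central GR measures $\cdots<\mu_{-2}<\mu_{-1}<\mu_0=\mu(H_1)$ inside $\mathcal{S}$. None of these $\mu_{-j}$ is a take-off measure. Now I would apply Lemma~\ref{s=1} with $X=H_1$, so $s=1$: the lemma's final clause says that if $s=1$ then $\mathcal{A}(\mu_j)$ contains no regular modules for any $j$ below the index of $\mu(H_1)$. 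Hence every $\mathcal{A}(\mu_{-j})$ consists entirely of preinjective modules (they cannot be preprojective since those are take-off modules, by the first Lemma of this subsection and Proposition~\ref{bigprop}(1), and $\mathcal{S}$ contains no take-off measure). But by Proposition~\ref{bigprop}(2) there are only finitely many indecomposable preinjective modules $M$ with $\mu(M)<\mu(H_1)$. Since the $\mu_{-j}$ are pairwise distinct and all satisfy $\mu_{-j}<\mu(H_1)$, and each $\mu_{-j}$ is realized by some (preinjective) indecomposable, this forces only finitely many such $\mu_{-j}$ — a contradiction with the assumed infinite descending chain.

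Therefore $\mathcal{S}$ cannot be $\mathbb{Z}$-indexed, and being a GR segment containing a central measure it must be $\mathbb{N}$-indexed, proving the corollary. The main obstacle I anticipate is making precise the transition from "every member of the left tail is realized by a preinjective module of GR measure $<\mu(H_1)$" to "finitely many members", which relies on the fact that distinct GR measures are realized by distinct isomorphism classes together with the finiteness statement in Proposition~\ref{bigprop}(2); one must be a little careful that Lemma~\ref{s=1} is being applied with the roles of indices correctly matched (the $N$ in that lemma ranges over regular modules with smaller index, and its conclusion in the $s=1$ case is exactly that no such regular $N$ exists), but the argument structure is otherwise routine given the machinery already assembled.
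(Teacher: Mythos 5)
Your proof is correct and follows essentially the same route as the paper: assume the segment is $\mathbb{Z}$-indexed, use Lemma~\ref{s=1} with $s=1$ to force every predecessor measure $\mu_{-j}$ to be realized only by preinjective modules, and contradict the finiteness of preinjectives with measure below $\mu(H_1)$ from Proposition~\ref{bigprop}(2). The only difference is that you spell out the re-indexing needed to apply Lemma~\ref{s=1} to a predecessor chain, which the paper leaves implicit.
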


\begin{proof} It is known that $\mu(H_{i+1})$ is a direct successor of
$\mu(H_i)$ for all $i\geq 1$. Thus a GR segment $\mathcal{S}_{\mathbb{Z}}$ contains
$\mu(H_i)$ for some $i$ if and only if it contains all $\mu(H_i)$.
Without loss of generality, we may assume $\mu(H_1)=\mu_0\in\mathcal{S}_{\mathbb{Z}}$.
By Lemma \ref{s=1}, for each GR measure $\mu_{-j}$ obtained by taking direct predecessors
from $\mu_0$ contains only preinjective modules. Thus
there are infinitely many indecomposable preinjective modules
with GR measures smaller than $\mu(H_1)$.  This is a contradiction.
\end{proof}

{\bf Remark.} For a tame quiver of type $\widetilde{\mathbb{A}}_n$, $\mu(H_1)$ does always not admit
a direct predecessors \cite{Ch4}.

\begin{lemm}\label{startwith} Assume that $|\mu_0|\leq |\mu_i|$ for all $i\geq 0$.
Then for each $i\geq 1$, $\mu_i$ starts with $\mu_0$.
\end{lemm}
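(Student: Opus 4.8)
The plan is to argue by contradiction, analyzing the smallest index at which the chain fails to start with $\mu_0$. Suppose $i\geq 1$ is minimal such that $\mu_i$ does not start with $\mu_0$. By Lemma \ref{basic}(1), since $\mu_0<\mu_{i-1}<\mu_i$ and $\mu_{i-1}$ starts with $\mu_0$ (using minimality of $i$, or the trivial case $i=1$), the only way $\mu_i$ can fail to start with $\mu_0$ is if $\mu_{i-1}$ itself equals $\mu_0$, or more precisely, the obstruction must occur because the maximal element gets ``small''. The key point is to bring in the hypothesis $|\mu_0|\leq|\mu_i|$: if $\mu_i$ does not start with $\mu_0$, then writing $\mu_0=\{a_1<a_2<\cdots<a_m\}$ with $a_m=|\mu_0|$, there is a first place where $\mu_i$ and $\mu_0$ disagree, and since $\mu_0<\mu_i$ that disagreement element lies in $\mu_i$; the failure of ``$\ll$'' then forces some element of $\mu_i$ that is $\leq a_m=|\mu_0|\leq|\mu_i|$ to sit between consecutive elements of $\mu_0$ (or beyond $\mu_0$ but below its max), contradicting that $\mu_0$ is an \emph{initial} segment.

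More concretely, the step I would carry out first is: since $\mu_{i-1}$ starts with $\mu_0$, we have $\mu_0\ll\mu_{i-1}$ or $\mu_0=\mu_{i-1}$. Because $\mu_i$ is a direct successor of $\mu_{i-1}$, a structural fact about direct successors (the Successor Lemma circle of ideas, and the relation $J=I\cup\{|J|\}$ appearing in Lemma \ref{basic}(2)) should pin down how $\mu_i$ relates to $\mu_{i-1}$: in many cases $\mu_i=\mu_{i-1}\cup\{|\mu_i|\}$ with $|\mu_i|>|\mu_{i-1}|$, in which case $\mu_i$ trivially starts with anything $\mu_{i-1}$ starts with. The genuinely dangerous case is when $\mu_i$ is obtained by ``dropping down'', i.e. $|\mu_i|<|\mu_{i-1}|$; here Lemma \ref{basic}(2) (applied with $I=\mu_{i-1}$, $J$ something starting with it) together with the running hypothesis $|\mu_0|\leq|\mu_i|$ gives the needed contradiction, because it forces $|\mu_i|$ to be strictly larger than a quantity that is itself $\geq|\mu_0|$ in a way incompatible with $\mu_i$ not starting with $\mu_0$.

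So the skeleton is: (i) reduce to minimal failing index $i$; (ii) invoke Lemma \ref{basic}(1) to see $\mu_{i-1}$ starts with $\mu_0$; (iii) split on whether $|\mu_i|>|\mu_{i-1}|$ or not — the first case is immediate since $\mu_i\supseteq\mu_{i-1}$ would give $\mu_i$ starts with $\mu_0$; (iv) in the remaining case use that $\mu_{i-1}<\mu_i$ with $|\mu_i|\leq|\mu_{i-1}|$ to write, via the order definition, that the smallest element of the symmetric difference lies in $\mu_i$ and is $\leq|\mu_i|\leq|\mu_{i-1}|$, then use $|\mu_0|\leq|\mu_i|$ and Lemma \ref{basic}(1)(2) to locate that element inside the ``body'' of $\mu_0$, contradicting $\mu_0\ll\mu_{i-1}$ (which says every element of $\mu_{i-1}\setminus\mu_0$, hence of $\mu_i$ in that range, exceeds every element of $\mu_0$).

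The main obstacle I anticipate is the bookkeeping in step (iv): correctly identifying, from $\mu_{i-1}<\mu_i$ and the size constraint $|\mu_0|\leq|\mu_i|\leq|\mu_{i-1}|$, exactly which element witnesses the failure and showing it must fall strictly below $|\mu_0|$ yet outside $\mu_0$ — this is where one must be careful about whether $\mu_i$ might simply agree with $\mu_0$ up through $|\mu_0|$ and only differ above, in which case $\mu_i$ \emph{does} start with $\mu_0$ and there is no contradiction to derive, meaning the ``bad'' configuration is genuinely ruled out by the length hypothesis. I would double-check that the direct-successor relationship does not allow $|\mu_i|$ to be both $\leq|\mu_{i-1}|$ and realized by an indecomposable whose GR measure sneaks an element below $|\mu_0|$; Lemma \ref{basic} and Proposition \ref{bigprop} should close this off, but it is the delicate point.
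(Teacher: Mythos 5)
There is a genuine gap: your step (iv) tries to derive a contradiction from the purely combinatorial data ``$\mu_0\ll\mu_{i-1}$, $\mu_{i-1}<\mu_i$, $|\mu_0|\leq|\mu_i|$'', but these hypotheses do not force $\mu_i$ to start with $\mu_0$ at the level of subsets of $\mathbb{N}$. For instance $\mu_0=\{1,3\}$, $\mu_{i-1}=\{1,3,5\}$, $\mu_i=\{1,2,4\}$ satisfy $\mu_0\ll\mu_{i-1}<\mu_i$ and $|\mu_0|=3\leq 4=|\mu_i|$, yet $\mu_i$ does not start with $\mu_0$. So no amount of bookkeeping with the order definition and Lemma \ref{basic} alone can close the argument. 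Your step (iii) is also incorrect as stated: $|\mu_i|>|\mu_{i-1}|$ does not imply $\mu_i\supseteq\mu_{i-1}$ (direct successors need not be obtained by adjoining the top element), and even $\mu_i\supseteq\mu_{i-1}\supseteq\mu_0$ would not by itself give that $\mu_i$ starts with $\mu_0$, since a new element below $|\mu_0|$ could be inserted.

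The missing module-theoretic input, which is the heart of the paper's proof, is that every GR measure $\mu_i\neq I_1$ decomposes as $\mu_i=\mu'_i\cup\{|\mu_i|\}$ where $\mu'_i$ is the GR measure of a GR submodule --- hence is itself a GR measure of $\Lambda$ with $\mu'_i<\mu_i$. Because the $\mu_r$ form a chain of consecutive direct successors, $\mu'_i$ must be $\leq\mu_0$ or equal to some $\mu_r$ with $1\leq r\leq i-1$. If $\mu'_i<\mu_0$ strictly, Lemma \ref{basic}(2) applied to $\mu'_i<\mu_0<\mu_i=\mu'_i\cup\{|\mu_i|\}$ gives $|\mu_0|>|\mu_i|$, contradicting the length hypothesis; so $\mu'_i=\mu_0$ or $\mu'_i=\mu_r$, and in either case induction finishes the proof. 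In the counterexample above, this is exactly what is violated: $\{1,2\}$ would be a GR measure strictly between $\mu_{i-1}$ and $\mu_i$, contradicting direct succession. Your write-up gestures at ``$\mu_i=\mu_{i-1}\cup\{|\mu_i|\}$ in many cases'' but never establishes the decomposition through a GR submodule, and that is precisely the idea the proof cannot do without.
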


\begin{proof} We use induction on $i$.    Since $\mu_1\neq I_1$,
we may write $\mu_1=\mu'_1\cup\{|\mu_1|\}$. Thus $\mu'_1\leq \mu_0<\mu_1$.
If the equality does not hold, then $|\mu_0|>|\mu_1|$.
This contradicts the minimality of $|\mu_0|$. Thus $\mu'_1=\mu_0$ and
$\mu_1$ starts with $\mu_0$.  Now assume that $i>1$ and $\mu_r$ starts
with $\mu_0$ for all $1\leq r\leq i$. Let $\mu_{i+1}=\mu'_{i+1}\cup\{|\mu_{i+1}|\}$.
If $\mu'_{i+1}\leq \mu_0<\mu_{i+1}$, then we are done.
Otherwise, $\mu'_{i+1}=\mu_r$ for some $1\leq r\leq i$.
Hence, $\mu'_{i+1}$ and thus $\mu_{i+1}$ starts with $\mu_0$ by induction.
\end{proof}

\begin{lemm}\label{regular1}  For each $i$, there is some $j>i$ such that
$\mathcal{A}(\mu_j)$ contains regular modules.
\end{lemm}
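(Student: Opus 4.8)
The plan is to argue by contradiction: suppose there is some $i_0$ such that $\mathcal{A}(\mu_j)$ contains \emph{no} regular module for every $j > i_0$. Then from $\mu_{i_0+1}$ onward the sequence $\mathcal{S}$ consists entirely of GR measures realized only by preinjective modules (they cannot be preprojective, by the first lemma of this subsection, since $\mathcal{S}$ contains no take-off measure). So it suffices to derive a contradiction from the existence of an infinite ascending chain of direct successors $\mu_{i_0+1} < \mu_{i_0+2} < \cdots$ all of whose module classes consist of preinjectives only.

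First I would record that, after possibly enlarging $i_0$, we may assume $|\mu_{i_0+1}| \le |\mu_j|$ for all $j \ge i_0+1$; indeed, among the finitely many preinjective modules of length $\le |\mu_{i_0+1}|$ only finitely many GR measures occur, so after finitely many steps the lengths $|\mu_j|$ stabilize above any given bound — more carefully, pick $i_0$ so that $|\mu_{i_0+1}|$ is minimal among $\{|\mu_j| : j > i_0\}$, which is possible since each value $|\mu_j|$ is realized by a preinjective module and there are only finitely many preinjectives of any bounded length, forcing $|\mu_j| \to \infty$. With this normalization, Lemma \ref{startwith} (applied to the shifted sequence starting at $\mu_{i_0+1}$) gives that every $\mu_j$, $j \ge i_0+1$, starts with $\mu_{i_0+1}$.

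Next I would bring in the structure of GR filtrations of preinjectives. Fix $M \in \mathcal{A}(\mu_j)$ for $j$ large; $M$ is preinjective and not in the take-off part. Iterating Proposition \ref{bigprop}(3) down a GR filtration of $M$, one eventually reaches either a preprojective module or a regular submodule; since the chain of GR measures strictly decreases, and all the GR measures $\mu_{i_0+1}, \ldots, \mu_j$ along the top are preinjective-only, the GR submodule $T$ of $M$ with $\mu(M) = \mu_j$ has $\mu(T) = \mu_{j-1}$ and $T$ is again preinjective. Descending all the way, the GR filtration of $M$ must pass through a regular module $Z_{2t}$ (of length $2|\delta|$) for some quasi-simple $Z$ of rank $t$ — this is the key point used already in the proof of Proposition \ref{ds}(1), where a GR-filtration of a preinjective with measure in the relevant range was shown to contain such a $Z_{2t}$. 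But then by Proposition \ref{bigprop}(5) we get $\mu(M) > \mu(Z_h)$ for all $h$, and by Lemma \ref{2delta} the measures $\mu(Z_h)$ for $h \ge 2t$ are completely determined; crucially $|\delta|$, hence $|Z_{2t}| = 2|\delta|$, is a fixed number. This means: every one of the infinitely many $M \in \mathcal{A}(\mu_j)$, $j > i_0$, admits in its GR filtration an \emph{initial segment} (from some preprojective or regular bottom up through length $2|\delta|$) of one of only finitely many isomorphism types — there are only finitely many indecomposable modules of length $\le 2|\delta|$.

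The contradiction then comes from counting. Since the $\mu_j$ start with $\mu_{i_0+1}$ and each has a GR filtration whose bottom part (below length $|\mu_{i_0+1}|$, say) is pinned down to finitely many possibilities, and since $|\mu_j|\to\infty$, I would argue that infinitely many distinct GR measures $\mu_j$ would all have to "branch off" from the same finite set of small GR measures through preinjective modules only — but Proposition \ref{bigprop}(2) says there are only \emph{finitely many} preinjective modules $M$ with $\mu(M) < \mu(H_1)$, and combining this with the fact (Proposition \ref{bigprop}(4), Lemma \ref{2delta}, Corollary \ref{ds1}) that beyond length roughly $2|\delta|$ every GR measure in the central-or-higher range is forced to be that of a regular module, we conclude that $\mathcal{A}(\mu_j)$ must contain a regular module for some $j > i$, contradicting the assumption. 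The main obstacle I expect is making the "branching" count rigorous: one needs to pin down precisely why an infinite strictly increasing chain of preinjective-only GR measures all starting with a fixed $\mu_{i_0+1}$ cannot exist, and the cleanest route is probably to show directly — via Corollary \ref{ds1} and Lemma \ref{2delta}(2) — that any GR measure $\mu$ with $|\mu| \ge 2|\delta|$ which is realized by a preinjective module is \emph{also} realized by a regular module, so that "no regular module in $\mathcal{A}(\mu_j)$" forces $|\mu_j| < 2|\delta|$ for all large $j$, flatly contradicting $|\mu_j| \to \infty$.
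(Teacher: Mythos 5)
Your opening moves are exactly the paper's: argue by contradiction, renormalize so that $|\mu_0|\leq|\mu_j|$ for all $j$ (possible because the lengths of preinjective-only measures must tend to infinity), and invoke Lemma \ref{startwith} to conclude that every $\mu_j$ starts with $\mu_0$. Up to that point you are on track. But the way you try to close the argument has a genuine gap, and the specific fact you lean on at the end is false. You propose to show that any GR measure $\mu$ with $|\mu|\geq 2|\delta|$ realized by a preinjective module is also realized by a regular module. Example (2) of the paper refutes this: the preinjective modules $M^i$ there have lengths $3i+2\geq 2|\delta|=6$ for $i\geq 2$, and by Lemma \ref{preinj} the classes $\mathcal{A}(\mu(M^i))$ contain no regular modules at all. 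Corollary \ref{ds1} only says that a measure of the form $\mu(X_i)$, $i\geq 2r$, $\mu(X_r)\geq\mu(H_1)$, cannot be preinjective; it does not say that every large preinjective measure is of that form. Your intermediate step is also unjustified: the GR submodule $T$ of $M\in\mathcal{A}(\mu_j)$ satisfies $\mu(T)=\mu_j\setminus\{|M|\}$, which by Lemma \ref{basic}(2) may lie far below the direct predecessor $\mu_{j-1}$, so the GR filtration need not descend through the measures $\mu_{j-1},\mu_{j-2},\ldots$, and the appearance of a regular $Z_{2t}$ in the filtration is not forced in this setting.

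The missing idea is much simpler and is where the paper goes after Lemma \ref{startwith}. Since each $\mu_j$ starts with $\mu_0$, every $M\in\mathcal{A}(\mu_j)$ has a GR filtration whose initial segment realizes $\mu_0$, i.e.\ $M$ contains a submodule belonging to $\mathcal{A}(\mu_0)$. Under your contradiction hypothesis $\mathcal{A}(\mu_0)$ consists of preinjective modules, hence is a finite set, while the modules in $\bigcup_{j\geq 1}\mathcal{A}(\mu_j)$ are infinitely many pairwise non-isomorphic indecomposables (their lengths are unbounded). So infinitely many indecomposable modules would contain a fixed indecomposable preinjective module as a submodule --- impossible, because a preinjective module admits nonzero homomorphisms to only finitely many indecomposables. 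This finiteness property of preinjectives is the one tool your proposal never invokes, and without it the counting argument you sketch cannot be made rigorous.
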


\begin{proof} To obtain a contradiction, we may assume,
without loss of generality, that $|\mu_0|\leq|\mu_i|$ and that $\mathcal{A}(\mu_i)$
contains only preinjective modules for all $i\geq 0$. By previous lemma, $\mu_i$
starts with $\mu_0$ for all $i\geq 1$. Since $\mathcal{A}(\mu_0)$ contains only
finitely many indecomposable preinjective modules, there are infinitely many indecomposable preinjective modules
containing a preinjective module $M\in\mathcal{A}(\mu_0)$ as a submodule.  This is impossible.
\end{proof}

Namely, we may show a much stronger consequence.

\begin{lemm} \label{regular2} There is some $i$ such that $\mathcal{A}(\mu_j)$ contains only
regular modules for all $j\geq i$.
\end{lemm}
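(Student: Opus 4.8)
The plan is to upgrade Lemma \ref{regular1} from ``infinitely often regular'' to ``eventually always regular'' by analyzing what can force a preinjective module into $\mathcal{A}(\mu_j)$. First I would fix, using Lemma \ref{regular1}, an index $i_1$ with $\mathcal{A}(\mu_{i_1})$ containing a regular module, and moreover (using that $\mathcal{S}$ contains no take-off measure and that there are only finitely many preinjectives below $\mu(H_1)$, Proposition \ref{bigprop}(2)) I may assume $|\mu_0| \le |\mu_i|$ for all $i \ge 0$ and that every $\mu_i$ with $i \ge i_1$ lies above $\mu(H_1)$ or at least above all the ``small'' preinjective measures. The key observation is: if $\mathcal{A}(\mu_j)$ contains a regular module $X_s$ and $\mathcal{A}(\mu_{j'})$ for some $j' > j$ again contains a regular module, with no regular modules in between, then by Lemma \ref{s=1} the regular module in $\mathcal{A}(\mu_{j'})$ is forced to be $X_{s+1}$ (apply the lemma with the roles set so that $X_{s'-1}$ must be the earlier regular module, hence $s' = s+1$ and the quasi-simple is the same $X$). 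So along $\mathcal{S}$, once a regular module appears, the ``next'' regular module in the sequence is its irreducible-monomorphism successor $X_{s+1}$.

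Next I would use Proposition \ref{ds}: for the quasi-simple $X$ occurring at $\mu_{i_1}$, whether $\mu(X_r)\ge\mu(H_1)$ (part (1)) or $\mu(X_r)<\mu(H_1)$ with some $X_i$ central (part (2), and centrality holds here since everything in $\mathcal{S}$ is central), there is an $i_0$ such that $\mu(X_{j+1})$ is a \emph{direct} successor of $\mu(X_j)$ for all $j \ge i_0$. Combining this with the previous paragraph: for $j$ large enough, $\mu(X_j) \in \mathcal{S}$ and its direct successor in $\mathcal{S}$ is exactly $\mu(X_{j+1})$, with no room for a preinjective measure in between. The remaining worry is the stretch of $\mathcal{S}$ \emph{before} we reach the regime $j \ge i_0$ for this particular $X$ — but that stretch is finite, and within it there are only finitely many measures, so only finitely many can carry preinjectives. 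Thus taking $i$ to be the maximum of $i_0$ and the largest index at which a preinjective still appears, $\mathcal{A}(\mu_j)$ contains only regular modules for all $j \ge i$.

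I should be careful about one gap in the above sketch: a priori the regular modules appearing along $\mathcal{S}$ from $i_1$ onward might not all come from the \emph{same} quasi-simple $X$ — the argument in the second paragraph only shows consecutive-in-$\mathcal{S}$ regular modules are of the form $X_s, X_{s+1}$, which does chain them into a single $X$-tube, but one must check the chain cannot ``jump'' by having a preinjective measure sitting between $\mu(X_j)$ and $\mu(X_{j+1})$ infinitely often. That is precisely what Proposition \ref{ds}(1),(2) rules out, but only eventually; so the honest statement is that the quasi-simple stabilizes to a single $X$ and then Proposition \ref{ds} finishes it. The main obstacle, then, is organizing the bookkeeping so that the two ``eventually'' thresholds — the one from Lemma \ref{regular1}/Lemma \ref{s=1} fixing $X$, and the one from Proposition \ref{ds} making $\mu(X_{j+1})$ a direct successor — are applied in the right order, together with the finiteness input from Proposition \ref{bigprop}(2) that bounds the preinjectives below $\mu(H_1)$ and hence below any fixed central measure in $\mathcal{S}$.
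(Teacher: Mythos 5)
Your overall strategy -- locate a regular module along $\mathcal{S}$, chain the regular measures into a single tube via Lemma \ref{s=1}, and then invoke Proposition \ref{ds} to make $\mu(X_{j+1})$ the direct successor of $\mu(X_j)$ -- matches the skeleton of the paper's argument. But there is a genuine gap at the very last step. Knowing that $\mu_j=\mu(X_t)$ for all large $j$ (with $t$ increasing) does \emph{not} yet give the conclusion: the lemma asserts that $\mathcal{A}(\mu_j)$ contains \emph{only} regular modules, and a priori an indecomposable preinjective module $M$ could satisfy $\mu(M)=\mu(X_t)$, so that $\mathcal{A}(\mu(X_t))$ contains both $X_t$ and $M$. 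Your closing sentence, ``taking $i$ to be the maximum of $i_0$ and the largest index at which a preinjective still appears,'' presupposes that such a largest index exists -- which is exactly the statement being proved. In the case $\mu_0<\mu(H_1)$ your appeal to the finiteness of preinjectives below $\mu(H_1)$ (Proposition \ref{bigprop}(2)) does close this, but in the case $\mu_0>\mu(H_1)$ there may be infinitely many preinjective modules above $\mu(H_1)$, and nothing in your argument prevents them from sharing the measures $\mu(X_t)$. The paper closes this with Corollary \ref{ds1}: if $\mu(X_r)\geq\mu(H_1)$ and $t\geq 2r$, then every indecomposable module with GR measure $\mu(X_t)$ is regular. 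You never invoke this corollary, and without it (or an equivalent) the proof is incomplete. (The same issue arises for the reduction to $\mu_i\neq\mu(H_j)$: one must also know that no preinjective module has measure $\mu(H_j)$.)

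A secondary, smaller imprecision: Lemma \ref{s=1} concludes that the earlier regular module has the \emph{same measure} as $X_{s-1}$, not that it is isomorphic to $X_{s-1}$ or lies in the same tube; your phrase ``the quasi-simple is the same $X$'' overstates what the lemma gives. This is harmless for your bookkeeping of measures, and you correctly note that Proposition \ref{ds} eventually pins things down, but it should be stated at the level of measures. The essential missing ingredient remains Corollary \ref{ds1}.
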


\begin{proof}
Since for any indecomposable preinjective module $N$,  $\mu(N)\neq\mu(H_j)$ for any $j$,
we may assume that $\mu_i\neq \mu(H_j)$ for any $i,j$.
Thus either $\mu_0<\mu(H_1)$ or $\mu_0>\mu(H_1)$.

Assume that $\mu_0>\mu(H_1)$. By Lemma \ref{regular1}, we may assume that $\mathcal{A}(\mu_i)$
contains a regular module $M$ such that $|M|>2|\delta|$ for some $i$.
We may write $M=X_s$ for some quasi-simple module $X$ and $s>2R_X$.
On the other hand,  $\mu_j>\mu(H_1)$ for all $j$. Therefore,
$\mu(X_{j+1})$ is a direct successor of $\mu(X_j)$ for all $j\geq 2R_X$ (Proposition \ref{ds}).
It follows that $\mu(X_{s+j})=\mu_{r+j}$.
Note that  there does not exsit an indecomposable preinjective module
$M$ with GR measure $\mu(M)=\mu(X_{s+j})$ for any $j\geq 0$ (Corollary \ref{ds1}).

If $\mu_0<\mu(H_1)$, then $\mu_j<\mu(H_1)$ for all $j$. Since there are
only finitely many indecomposable preinjective module $N$ with $\mu(N)<\mu(H_1)$ (Proposition \ref{bigprop}),
we may obtain some $i$ such that $\mathcal{A}(\mu_j)$ consists of regular modules for each $j\geq i$.

The proof is completed.
\end{proof}

\subsection{$\mathbb{Z}$-indexed GR segments}

Let $\mathcal{S}_{\mathbb{Z}}$ be a $\mathbb{Z}$-indexed GR segment:
$$\ldots<\mu_{-3}<\mu_{-2}<\mu_{-1}<\mu_0<\mu_1<\mu_2<\mu_3<\ldots$$
We describes $\mathcal{A}(\mu_i)$ for $i$ smaller enough.
\begin{lemm}\label{preinj}
There is some $r$ such that $\mathcal{A}(\mu_i)$ contains only
preinjective modules for all $i<r$.
\end{lemm}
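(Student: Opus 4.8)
The statement to prove is: for a $\mathbb{Z}$-indexed GR segment $\mathcal{S}_{\mathbb{Z}}$, there is some $r$ such that $\mathcal{A}(\mu_i)$ contains only preinjective modules for all $i<r$. The plan is to argue by contradiction, exploiting the trichotomy preprojective / regular / preinjective together with the finiteness statements already available. First I would observe that, since $\mathcal{S}_{\mathbb{Z}}$ is $\mathbb{Z}$-indexed, it is neither the take-off part nor can it reach down into the take-off part: by the first lemma of Section \ref{seq} every $\mu\in\mathcal{S}_{\mathbb{Z}}$ exceeds all take-off measures $I_i$, so no $\mathcal{A}(\mu_i)$ contains a preprojective module. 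Hence for every $i$, $\mathcal{A}(\mu_i)$ consists only of regular and preinjective modules, and it suffices to rule out regular modules occurring for arbitrarily small indices.

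The heart of the argument is to suppose, for contradiction, that there are arbitrarily small indices $i$ with $\mathcal{A}(\mu_i)$ containing a regular module. I would distinguish according to the position of the segment relative to $\mu(H_1)$. By Corollary \ref{homo}, $\mathcal{S}_{\mathbb{Z}}$ contains no $\mu(H_j)$; and since a $\mathbb{Z}$-indexed segment is order-convex (closed under predecessors and successors), every $\mu_i$ lies strictly on one side of the homogeneous chain, so either $\mu_i<\mu(H_1)$ for all $i$ or $\mu_i>\mu(H_1)$ for all $i$. In the case $\mu_i<\mu(H_1)$ for all $i$: any regular module $Y_t$ with $\mu(Y_t)<\mu(H_1)$ forces, via Proposition \ref{bigprop}(4a) and Lemma \ref{basic}, that the whole regular tube of $Y$ stays below $\mu(H_1)$, and in fact one checks $\mu(Y_t)$ for such $Y$ lies in a bounded range; but then going down from such a $\mu_i$ by direct predecessors and using Lemma \ref{s=1} in the contrapositive (a regular $\mathcal{A}(\mu_i)$ with the next-lower regular occurrence forces $X_{s-1}$, pushing lengths down) produces an infinite descending chain of GR measures below a fixed bound — impossible since there are only finitely many GR measures below $\mu(H_1)$ that are not take-off, by Proposition \ref{bigprop}(2). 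In the case $\mu_i>\mu(H_1)$ for all $i$: pick $i$ with $X_s\in\mathcal{A}(\mu_i)$ regular and $|X_s|>2|\delta|$, so $s>2R_X$; then by Corollary \ref{ds1} applied repeatedly, every $\mathcal{A}(\mu_{i-1}),\mathcal{A}(\mu_{i-2}),\ldots$ down to index $i-(s-2R_X)$ consists of regular modules (their lengths still exceed $2|\delta|$), and by Lemma \ref{s=1} the regular module at each step is the quasi-length-predecessor $X_{s-1}, X_{s-2},\ldots$; iterating drives the quasi-length down to $\leq 2R_X$, at which point Proposition \ref{ds}(1) together with Lemma \ref{2delta} pins down $\mu_j$ as $\mu(X_{\cdot})$ and we can no longer have predecessors of the required form, contradicting $\mathbb{Z}$-indexing.

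The main obstacle I anticipate is the bookkeeping in the case $\mu_i>\mu(H_1)$: controlling precisely how far down the chain of predecessors the "only regular modules" and "it is exactly $X_{s-1}$" conclusions propagate, and then showing that once the quasi-length drops near $2R_X$ the sequence $\mu_{-j}$ cannot continue indefinitely (i.e. $\mathbb{Z}$-indexing fails). This requires combining Corollary \ref{ds1} (no preinjective module can share a GR measure with $X_i$ for $i\geq 2R_X$) with Lemma \ref{s=1} to identify the regular module at each index, and then invoking the fact that a quasi-simple $X$ of rank $r>1$ with $\mu(X_r)\geq\mu(H_1)$ has $\mu(X_r)$ admitting only finitely many smaller GR measures that are not preinjective-or-take-off; the descent therefore terminates. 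Once both cases are closed, the contradiction shows that regular modules occur in $\mathcal{A}(\mu_i)$ only for $i$ above some threshold, and below that threshold — since preprojectives are already excluded — only preinjective modules remain, which is the claim with $r$ taken to be that threshold.
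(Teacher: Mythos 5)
Your core mechanism --- Lemma \ref{s=1} forces the regular module at the next-lower regular occurrence in the segment to be the quasi-length predecessor $X_{s-1}$, hence of strictly smaller length --- is exactly what the paper uses, and it alone suffices. The paper's proof simply takes an indecomposable regular module $X$ of minimal length with $\mu(X)\in\mathcal{S}_{\mathbb{Z}}$ and observes that any regular occurrence at a lower index would, by Lemma \ref{s=1} applied to the closest such occurrence, have strictly smaller length than $X$, a contradiction; below that index only preinjectives can remain since preprojectives are take-off. No case division on the position of the segment relative to $\mu(H_1)$ is needed.

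That said, several of the auxiliary steps you wrap around this idea would not survive scrutiny. (a) Proposition \ref{bigprop}(2) bounds only the number of \emph{preinjective} modules with measure below $\mu(H_1)$; it does not say there are finitely many non-take-off GR measures below $\mu(H_1)$ (an exceptional tube with $\mu(X_r)<\mu(H_1)$ may contribute infinitely many central measures), so your stated reason for impossibility in the first case is unsupported --- the contradiction should come from the infinite strictly decreasing sequence of lengths that your own descent produces. (b) In the second case, Corollary \ref{ds1} does not let you conclude that $\mathcal{A}(\mu_{i-1}),\mathcal{A}(\mu_{i-2}),\ldots$ all consist of regular modules: it constrains only measures already known to be of the form $\mu(X_j)$ with $j\geq 2R_X$, which $\mu_{i-1}$ is not known to be before Lemma \ref{s=1} is applied, and Lemma \ref{s=1} identifies the \emph{closest regular occurrence below}, which need not sit at index $i-1$ (preinjective-only measures may intervene). (c) The concluding step ``we can no longer have predecessors of the required form, contradicting $\mathbb{Z}$-indexing'' is a non sequitur: a $\mathbb{Z}$-indexed segment is perfectly compatible with only preinjective modules occurring below some index --- indeed that is precisely the statement being proved. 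Again, the only contradiction available, and the only one needed, is that module lengths cannot decrease indefinitely.
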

\begin{proof} Let $X$ be an indecomposable regular module such that
$\mu(X)\in\mathcal{S}_{\mathbb{Z}}$ and such that $|X|$ is minimal.
Without loss of generality we may assume that $\mu(X)=\mathcal{A}(\mu_0)$.
If there is an indecomposable regular module $Y$ with $\mu(Y)<\mu(X)=\mu_0$
and $\mu(Y)\in\mathcal{S}$, then  $|Y|<|X|$ by Lemma \ref{s=1}.
This is a contradiction. Thus $\mathcal{A}(\mu_{-i})$ contains no regular modules for any $i>0$.
\end{proof}

\begin{coro}For all $i$,
 $\mu_i>\mu(H_1)$.
\end{coro}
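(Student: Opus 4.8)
The plan is to argue by contradiction, using the finiteness statement in Proposition \ref{bigprop}(2) together with Lemma \ref{preinj}. First I would observe that, since $\mathcal{S}_{\mathbb{Z}}$ is $\mathbb{Z}$-indexed, Corollary \ref{homo} forbids it from containing any $\mu(H_j)$; in particular $\mu_i\neq\mu(H_1)$ for every $i$, so for each $i$ exactly one of $\mu_i<\mu(H_1)$ or $\mu_i>\mu(H_1)$ holds. It therefore suffices to rule out the possibility that $\mu_{i_0}<\mu(H_1)$ for some index $i_0$.

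Suppose then that $\mu_{i_0}<\mu(H_1)$. Because the sequence $\ldots<\mu_{-1}<\mu_0<\mu_1<\ldots$ is strictly increasing and indexed by all of $\mathbb{Z}$, we get $\mu_i<\mu_{i_0}<\mu(H_1)$ for the infinitely many indices $i<i_0$. By Lemma \ref{preinj} there is an $r$ such that $\mathcal{A}(\mu_i)$ consists only of preinjective modules whenever $i<r$. Hence, picking any $i<\min(r,i_0)$, each set $\mathcal{A}(\mu_i)$ contains an indecomposable preinjective module $M_i$ with $\mu(M_i)=\mu_i<\mu(H_1)$, and the $M_i$ are pairwise non-isomorphic since the GR measures $\mu_i$ are pairwise distinct. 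This exhibits infinitely many indecomposable preinjective modules with GR measure smaller than $\mu(H_1)$, contradicting Proposition \ref{bigprop}(2).

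Consequently no $\mu_i$ can lie below $\mu(H_1)$, and since $\mu_i\neq\mu(H_1)$ we conclude $\mu_i>\mu(H_1)$ for all $i$. I do not anticipate a genuine obstacle in this argument; the only subtle point is the use of Corollary \ref{homo} to exclude the equality $\mu_i=\mu(H_j)$, which is precisely what distinguishes the $\mathbb{Z}$-indexed situation from the $\mathbb{N}$-indexed one treated in Corollary \ref{homo} itself.
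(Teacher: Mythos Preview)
Your proof is correct and follows essentially the same route as the paper's: combine Lemma \ref{preinj} with the finiteness statement of Proposition \ref{bigprop}(2) to rule out $\mu_i<\mu(H_1)$ for any $i$. The only cosmetic difference is that you invoke Corollary \ref{homo} to exclude the equality $\mu_i=\mu(H_1)$, whereas the paper's terse argument leaves this implicit (it follows already from Lemma \ref{preinj}, since $\mathcal{A}(\mu_i)$ for $i<r$ contains only preinjective modules while $H_1\in\mathcal{A}(\mu(H_1))$ is regular).
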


\begin{proof} Since there are only finitely many indecomposable preinjective
modules with GR measures smaller than $\mu(H_1)$, we have $\mu_{-i}>\mu(H_1)$
for $i>0$ larger enough by previous lemma.  Thus $\mu_i>\mu(H_1)$ for all $i$.
 \end{proof}

Let $a$ be the number of the isomorphism classes of the exceptional
quasi-simple modules $X$ whose GR measures satisfy $\mu(X_{R_X})\geq \mu(H_1)$.
\begin{prop} The number of the $\mathbb{Z}$-indexed GR segments is bounded by $a$.
\end{prop}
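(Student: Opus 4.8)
The plan is to show that each $\mathbb{Z}$-indexed GR segment is, from some index onwards, the sequence of direct successors $\mu(X_j), \mu(X_{j+1}), \ldots$ attached to a single exceptional quasi-simple module $X$ with $\mu(X_{R_X}) \geq \mu(H_1)$, and that distinct $\mathbb{Z}$-indexed GR segments give rise to distinct such $X$. Since there are exactly $a$ isomorphism classes of such $X$, this yields the bound.

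First I would invoke the results already assembled. By Lemma \ref{regular2}, for a $\mathbb{Z}$-indexed GR segment $\mathcal{S}_{\mathbb{Z}}$ there is some $i$ so that $\mathcal{A}(\mu_j)$ consists only of regular modules for all $j \geq i$; by the corollary just proved, $\mu_j > \mu(H_1)$ for all $j$, so in particular these regular modules cannot be homogeneous (homogeneous modules $H_i$ lie in an $\mathbb{N}$-indexed segment by Corollary \ref{homo}, and anyway $\mu(H_i)$ starts with $\mu(H_1)$). Hence for $j$ large we may write $\mathcal{A}(\mu_j) \ni X_{s_j}$ with $X$ exceptional quasi-simple. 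The next step is to pin down that a \emph{single} $X$ eventually governs the whole tail: choosing $j$ with $|\mu_j| > 2|\delta|$ and writing the module as $X_s$ with $s > 2R_X$, Proposition \ref{ds}(1) gives that $\mu(X_{t+1})$ is a direct successor of $\mu(X_t)$ for all $t \geq 2R_X$, so the tail of $\mathcal{S}_{\mathbb{Z}}$ from $\mu_j$ onward is exactly $\mu(X_s) < \mu(X_{s+1}) < \mu(X_{s+2}) < \cdots$, and by Corollary \ref{ds1} each of these GR measures is realized only by the regular module $X_t$ itself. This identifies $\mathcal{S}_{\mathbb{Z}}$ with the (eventual) successor sequence of $X$, and forces $\mu(X_{R_X}) \geq \mu(H_1)$ since $\mu(X_s)$ starts with $\mu(X_{R_X})$ and $\mu(X_s) > \mu(H_1)$.

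Then I would argue injectivity of the assignment $\mathcal{S}_{\mathbb{Z}} \mapsto X$. Suppose two $\mathbb{Z}$-indexed segments $\mathcal{S}$ and $\mathcal{S}'$ both have tails governed by the same exceptional quasi-simple $X$ of rank $r$. Then both contain $\mu(X_t)$ for all sufficiently large $t$; but each GR measure belongs to exactly one GR segment (a GR segment is closed under direct predecessors and successors, and Theorem \ref{partition} together with the Successor Lemma shows the $\mathbb{N}$- and $\mathbb{Z}$-indexed segments partition the non-take-off, non-landing measures), so $\mathcal{S} = \mathcal{S}'$. Hence the assignment is injective into the set of $a$ isomorphism classes, giving the bound $a$.

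The main obstacle I anticipate is the step identifying a single quasi-simple $X$ for the whole tail rather than a priori different quasi-simple modules $Y^{(j)}$ appearing in $\mathcal{A}(\mu_j)$ for different $j$. This is where Lemma \ref{2delta}(2) does the real work: once two consecutive large GR measures in $\mathcal{S}_{\mathbb{Z}}$ are realized by $X_i$ and $Y_j$ with $i, j \geq 2r$ (possible because $X_{s-1}$ is the GR submodule of $X_s$ and GR submodules of central regular modules are again regular of the expected quasi-length, cf.\ Lemma \ref{s=1}), part (2) forces $r = s$ and $\mu(X_t) = \mu(Y_t)$ for all $t \geq r$, so $X$ and $Y$ determine the same tail. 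Care is also needed to confirm that $\mathcal{A}(\mu_j)$ for large $j$ contains \emph{no} module other than the relevant $X_t$ — but that is precisely Corollary \ref{ds1}, applied with $i \geq 2R_X$ — so the argument closes.
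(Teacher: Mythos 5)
Your proposal is correct and follows essentially the same route as the paper: use the preceding corollary ($\mu_i>\mu(H_1)$ for all $i$) together with Lemma \ref{regular2} and Proposition \ref{ds} to show that the tail of a $\mathbb{Z}$-indexed segment is the successor sequence $\mu(X_s)<\mu(X_{s+1})<\cdots$ of an exceptional quasi-simple $X$ with $\mu(X_{R_X})\geq\mu(H_1)$, and then conclude by injectivity of the assignment $\mathcal{S}_{\mathbb{Z}}\mapsto X$ into the $a$ isomorphism classes. Your treatment is in fact more detailed than the paper's (which leaves the identification of the tail and the injectivity as assertions), and your closing remark correctly notes that Corollary \ref{ds1} only forces regularity while Lemma \ref{2delta}(2) is what rules out a second quasi-simple giving a genuinely different tail.
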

\begin{proof} Assume that $\mathcal{S}_{\mathbb{Z}}$ is a $\mathbb{Z}$-indexed GR segment.
Since $\mu_i>\mu(H_1)$, there is a $\mu_i\in\mathcal{S}_{\mathbb{Z}}$ such that
$\mathcal{A}(\mu_i)$ contains a regular module $X_s$ and $\mu_{i+j}=\mu(X_{s+j})$
for some quasi-simple module $X$.
Therefore, $\mathcal{S}_{\mathbb{Z}}$ gives
(not unique in general)  an exceptional quasi-simple $X$ such that
$\mu(X_{R_X})>\mu(H_1)$. It is clear that  different $\mathbb{Z}$-indexed GR segments
correspond to non-isomorphic quasi-simple modules.
Therefore, there are at most $a$ $\mathbb{Z}$-indexed GR segments.
\end{proof}

\subsection{$\mathbb{N}$-indexed GR segments}
Corollary \ref{homo} shows the existence of an $\mathbb{N}$-indexed GR segment.
It was already proved in \cite{Ch4} that for a tame quiver there are, but
only finitely many, $\mathbb{N}$-indexed GR segments. However, an upper bound of the number of
this kind of GR segments is still missing.  Similar to the
discussion for $\mathbb{Z}$-indexed GR segments, we will describe the
$\mathbb{N}$-indexed GR segments containing central measures and give an upper bound of the number.
Let $\mathcal{S}_{\mathbb{N}}$: $\mu_0<\mu_1<\mu_2<\ldots$ be an $\mathbb{N}$-indexed
GR segment ($\mu_0$ has no direct predecessor), which contains no take-off measures.

\begin{lemm}\label{preinj2} Only finitely many $\mathcal{A}(\mu_i)$
contains preinjective modules.
\end{lemm}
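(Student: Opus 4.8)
The plan is to argue by contradiction, exploiting the finiteness of the set of indecomposable preinjective modules whose GR measure is bounded above by $\mu(H_1)$ (Proposition \ref{bigprop}(2)) together with the ``starts with'' machinery already developed for the sequence $\mathcal{S}_{\mathbb{N}}$. So suppose that infinitely many $\mathcal{A}(\mu_i)$ contain a preinjective module. First I would apply Lemma \ref{regular2} to fix an index $m$ such that $\mathcal{A}(\mu_j)$ contains only regular modules for all $j\geq m$; this immediately forces the infinitely many preinjective members to occur among the $\mu_i$ with $i<m$, which is absurd if $\mathcal{S}_{\mathbb{N}}$ had a smallest index. The subtlety is precisely that $\mathcal{S}_{\mathbb{N}}$ is $\mathbb{N}$-indexed with $\mu_0$ the minimal term, so Lemma \ref{regular2} as stated already does the job once we observe that $\mathcal{S}_{\mathbb{N}}$, having no take-off measures and no direct predecessor at $\mu_0$, is exactly a sequence of the type to which Lemmas \ref{regular1} and \ref{regular2} apply.

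More carefully, the steps I would carry out are: (i) note $\mathcal{S}_{\mathbb{N}}$ contains no take-off measure by hypothesis, so no $\mathcal{A}(\mu_i)$ contains a preprojective module (by the first Lemma of Section \ref{seq}); (ii) invoke Lemma \ref{regular2} to get $m$ with $\mathcal{A}(\mu_j)$ purely regular for $j\geq m$; (iii) conclude that the only indices whose $\mathcal{A}$ can contain a preinjective module lie in the finite set $\{0,1,\dots,m-1\}$, giving the desired finiteness. If one wants a proof that does not simply cite Lemma \ref{regular2} but re-derives it in this setting, the alternative is: after possibly discarding an initial segment, reduce to the case $|\mu_0|\leq|\mu_i|$ for all $i$ (using that each $\mu_i$ with infinitely many preinjectives below would otherwise give infinitely many preinjective submodules of a fixed module), apply Lemma \ref{startwith} so that every $\mu_i$ starts with $\mu_0$, and then observe that a preinjective $M\in\mathcal{A}(\mu_i)$ with $\mu_i$ starting with $\mu_0$ has $\mu(M)$ starting with $\mu_0$; since $\mu_0$ is realized only by finitely many (preinjective or regular) indecomposables $T$, and each such $T$ embeds in only finitely many preinjectives with $T$ a GR submodule, only finitely many preinjective $M$ can arise.

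The main obstacle, as usual in this circle of arguments, is the reduction to the case $|\mu_0|\leq |\mu_i|$: a priori the lengths along $\mathcal{S}_{\mathbb{N}}$ need not be monotone, so one must first locate an index where the minimal length is attained and restart the sequence there, which is legitimate only because we are proving a statement (``only finitely many $\mathcal{A}(\mu_i)$ contain preinjectives'') that is insensitive to discarding a finite initial segment. Once that reduction is in place the rest is the now-familiar pigeonhole: infinitely many preinjective modules all built on top of one of finitely many fixed submodules contradicts the fact that any indecomposable preinjective has only finitely many submodules. I expect the cleanest write-up to be the short one citing Lemma \ref{regular2} directly, with the length-reduction absorbed into that lemma's hypotheses.
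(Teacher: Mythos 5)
Your proposal is correct and matches the paper exactly: the paper's proof is the single sentence ``This is just a restatement of Lemma \ref{regular2}'', and your steps (i)--(iii) simply spell out why citing that lemma suffices for an $\mathbb{N}$-indexed segment. The additional re-derivation you sketch is unnecessary but consistent with how Lemma \ref{regular2} itself is proved in the paper.
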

\begin{proof} This is just a restatement of Lemma \ref{regular2}.
\end{proof}

\begin{prop}
There is some $i>0$ and some quasi-simple module $X$ such that
$\mu_{i+j}=\mu(X_{t+j})$ for some $t\geq 1$ and all $j\geq 0$.
\end{prop}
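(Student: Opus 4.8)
The plan is to use the structural results already assembled, in particular Lemma~\ref{regular2} and Proposition~\ref{ds}, to pin down the tail of $\mathcal{S}_{\mathbb{N}}$ as the GR measures of a ray $X_t\subset X_{t+1}\subset\cdots$ emanating from a single quasi-simple module $X$. First I would invoke Lemma~\ref{regular2} to obtain an index $i_1$ with $\mathcal{A}(\mu_j)$ consisting only of regular modules for all $j\geq i_1$. Since $\mathcal{S}_{\mathbb{N}}$ contains no take-off measures and (by the corollary following Lemma~\ref{preinj2}-type reasoning, or by noting that the finitely many preinjectives below $\mu(H_1)$ are exhausted) eventually every $\mu_j$ satisfies $\mu_j>\mu(H_1)$, I may also assume $\mu_j>\mu(H_1)$ for $j\geq i_1$. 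Fix such a $\mu_{i_1}$ and pick $M\in\mathcal{A}(\mu_{i_1})$ regular; enlarging $i_1$ if necessary (using that the lengths $|\mu_j|$ along a sequence of direct successors are unbounded, together with Lemma~\ref{2delta}) I may take $M=X_s$ with $s>2R_X$ for some quasi-simple $X$, so in particular $\mu(X_{R_X})\geq\mu(H_1)$ by Proposition~\ref{bigprop}(4).

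Next, with $X$ of rank $r=R_X>1$ and $\mu(X_r)\geq\mu(H_1)$ and $s\geq 2r$, Proposition~\ref{ds}(1) tells us that $\mu(X_{j+1})$ is a direct successor of $\mu(X_j)$ for all $j\geq 2r$, so in particular for all $j\geq s$. Hence the sequence of direct successors of $\mu_{i_1}=\mu(X_s)$ is exactly $\mu(X_s)<\mu(X_{s+1})<\mu(X_{s+2})<\cdots$; that is, $\mu_{i_1+j}=\mu(X_{s+j})$ for all $j\geq 0$. Taking $i=i_1$, $t=s$ (which is $\geq 1$), and the quasi-simple module $X$ gives the assertion. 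The case $r=1$ is excluded here because if $X$ were homogeneous then $\mathcal{S}_{\mathbb{N}}$ would contain $\mu(H_s)$, hence all $\mu(H_j)$, contradicting either the choice of $\mu_0$ as having no direct predecessor (cf. the Remark) or, more robustly, the fact that then infinitely many preinjectives would have GR measure below $\mu(H_1)$ as in Corollary~\ref{homo} — so any $M\in\mathcal{A}(\mu_{i_1})$ of large length is genuinely exceptional.

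The main obstacle is the step of passing from ``eventually regular'' to ``eventually on a single ray'': a priori the regular modules $M\in\mathcal{A}(\mu_j)$ for consecutive $j$ could jump between different quasi-simple components. The key is that once $|M|>2|\delta|$, Corollary~\ref{ds1} forbids preinjective modules at those measures and Lemma~\ref{2delta}(2),(3) forces any two regular modules with a common large GR measure to lie on the same ray (same rank, same tail of GR measures); combined with Proposition~\ref{ds}(1), which guarantees the direct successor of $\mu(X_j)$ is again $\mu(X_{j+1})$ with no intervening GR measure, this rigidity propagates the single-ray description forward indefinitely. One must also be slightly careful that the $\mathbb{N}$-indexing (rather than $\mathbb{Z}$-indexing) plays no role here: the statement only concerns the forward tail, and the argument above never refers to direct predecessors of $\mu_0$, so it applies verbatim.
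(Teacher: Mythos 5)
Your overall strategy (pass to a regular tail via Lemma \ref{regular2}, then use Proposition \ref{ds} to lock that tail onto a single ray $X_t\subset X_{t+1}\subset\cdots$) is the same as the paper's, but there is a genuine gap in the middle step: the claim that ``eventually every $\mu_j$ satisfies $\mu_j>\mu(H_1)$'' is not justified for an $\mathbb{N}$-indexed segment, and it is false in general. The corollary you allude to ($\mu_i>\mu(H_1)$ for all $i$) is stated and proved only for $\mathbb{Z}$-indexed segments, and its proof essentially uses the backward-infinite tail of preinjective measures; for an $\mathbb{N}$-indexed segment, exhausting the finitely many preinjectives below $\mu(H_1)$ only shows that eventually $\mathcal{A}(\mu_j)$ contains no preinjective modules, not that $\mu_j$ climbs above $\mu(H_1)$. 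In fact, if $\mu_0<\mu(H_1)$ then every $\mu_j<\mu(H_1)$, since a chain of direct successors cannot jump over the GR measure $\mu(H_1)$; the regular modules in the tail then lie in exceptional tubes with $\mu(X_{R_X})<\mu(H_1)$, so Proposition \ref{ds}(1) does not apply and your appeal to it fails. This is precisely the situation Proposition \ref{ds}(2) was set up for. The paper's proof avoids the dichotomy altogether: it uses both parts of Proposition \ref{ds}, together with the finiteness of the set of exceptional quasi-simple modules, to choose one threshold $s$ valid for \emph{every} quasi-simple $X$, and then picks $i$ with $|\mu_j|>|X_s|$ for all $j\geq i$ and all exceptional $X$. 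You need to add the branch $\mu(X_{R_X})<\mu(H_1)$ via Proposition \ref{ds}(2) (noting that $X_t$ is a central module, so its hypothesis is satisfied).

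A second, smaller flaw: your exclusion of the homogeneous case $r=1$ rests on a non-existent contradiction. A segment containing the measures $\mu(H_j)$ is a perfectly legitimate $\mathbb{N}$-indexed segment --- that is exactly what Corollary \ref{homo} establishes, and by the Remark $\mu(H_1)$ itself has no direct predecessor for type $\widetilde{\mathbb{A}}_n$; the ``infinitely many preinjectives below $\mu(H_1)$'' only arise if one assumes $\mathbb{Z}$-indexing. This does not endanger the statement, since a tail landing on a homogeneous ray satisfies the conclusion with $X$ homogeneous (using Proposition \ref{bigprop}(2)), but the case should be admitted rather than ``excluded by contradiction''.
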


\begin{proof} We may assume without loss of generality that
$\mathcal{A}(\mu_i)$ only contain regular modules for all $i$.
By Proposition \ref{ds}, we may also select $s>0$ such that, for any quasi-simple module $X$,
$\mu(X_{j+1})$ is a direct successor of $\mu(X_j)$ for any $j\geq s$.
Let $i>0$ such that $|\mu_j|>|X_s|$ for all $j\geq i$ and all exceptional quasi-simple modules $X$.
Assume that $X$ is a quasi-simple module such that $X_t\in\mathcal{A}(\mu_{i})$,
then $X_{t+j}\in\mathcal{A}(\mu_{i+j})$ for all $j
\geq 0$. In particular $\mathcal{S}_{\mathbb{N}}$ gives rise  to a quasi-simple module $X$.
\end{proof}

{\it Proof of Theorem}. A $\mathbb{Z}$-indexed GR segment or an
$\mathbb{N}$-indexed GR segment that contains central measures, which are not of the forms $\mu(H_i)$,
gives rise to (may not be unique) an exceptional quasi-simple module. Moreover, different such GR segments
correspond to non-isomorphic quasi-simple modules. Thus the number of these kinds of GR segments is bounded by
$b$, the number of the isomorphism classes of the exceptional quasi-simple modules.
On the other hand, all $\mu(H_i)$ are contained in the same $\mathbb{N}$-indexed GR segment.
Thus the central part of
a tame quiver contains at most $b+1$ GR-segments. Note that the take-off part is also $\mathbb{N}$-indexed
and the landing part is $-\mathbb{N}$-indexed.
Therefore, a tame quiver has at most $b+3$ GR-segments.

\subsection{Examples}

(1) Let $Q$ be a tame quiver of type $\widetilde{\mathbb{A}}_n$ with sink-source orientation.
If $n=1$, i.e., $Q$ is a Kronecker quiver, then there is precisely one $\mathbb{N}$-indexed GR segment,
which consists of the GR measures
$\mu(H_i)$ of homogeneous modules. If $n>1$, then the central part contains only  two $\mathbb{N}$-indexed GR segments:
one is, as above, consisting of GR measures of homogeneous $\mu(H_i)$, and the other one is of the form $\{\mu(X_i)\}$, where $X$ is any exceptional quasi-simple module.
Note that in this case, there are no $\mathbb{Z}$-indexed GR segments.

\bigskip
(2) Let $Q$ be the following quiver:
$$\xymatrix@R=6pt@C=20pt{&2\ar[rd]&\\1\ar[ru]\ar[rr]&& 3.\\
}$$
Let $X$ be, up to isomorphism, the unique quasi-simple of length $2$.
We denote by $M^i$ the unique (up to isomorphism)
indecomposable preinjective module with length $3i+2$.
The only $\mathbb{Z}$-indexed GR segment is the following:
$$\ldots<\mu(M^i)<\ldots<\mu(M^2)<\mu(M^1)=\mu(X_3)<\mu(X_4)<\mu(X_5)<\ldots<\mu(X_j)<\ldots$$
In the central part, there is precisely one $\mathbb{N}$-indexed GR segment which is given by the GR measures
of homogeneous modules:
$$\mu(X_2)=\mu(H_1)<\mu(H_2)<\ldots<\mu(H_i)<\ldots$$
We refer to \cite{R2} for  details of the description of the GR measures of this quiver.

\bigskip
We may characterize the tame quivers of type $\widetilde{\mathbb{A}}_n$,
which admit $\mathbb{Z}$-indexed GR segments.
\begin{prop}Let $Q$ be a tame quiver of type $\widetilde{\mathbb{A}}_n$. Then the following are equivalent:
\begin{itemize}
 \item[(1)] $Q$ is not equipped with a sink-source orientation.
 \item[(2)] There are preinjective central modules.
 \item[(3)] There are infinitely many isomorphism classes of  preinjective central modules.
 \item[(4)] There exists a $\mathbb{Z}$-indexed GR segment.
\end{itemize}
\end{prop}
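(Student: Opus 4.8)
The plan is to prove the four conditions equivalent by establishing the cycle $(1)\Rightarrow(3)\Rightarrow(2)\Rightarrow(1)$ together with $(3)\Rightarrow(4)\Rightarrow(2)$, using the structural results from Sections~\ref{reg} and \ref{seq}. The key point, which ties everything to type $\widetilde{\mathbb{A}}_n$, is the classification of which exceptional quasi-simple modules $X$ satisfy $\mu(X_{R_X})\geq\mu(H_1)$: for $\widetilde{\mathbb{A}}_n$ with sink-source orientation the table gives $b=0$ when $p=q=1$ and in general every exceptional quasi-simple satisfies $\mu(X_{R_X})<\mu(H_1)$, so $a=0$; whereas a non-sink-source orientation produces at least one exceptional quasi-simple $X$ with $\mu(X_{R_X})>\mu(H_1)$. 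I would first make this dichotomy precise, using the explicit description of indecomposable regular modules for $\widetilde{\mathbb{A}}_{p,q}$ (the two exceptional tubes of ranks $p$ and $q$) and a direct length/GR-measure comparison with $H_1$, exactly as in the computations behind Proposition~\ref{bigprop}(4).

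For $(1)\Rightarrow(3)$: assuming $Q$ is not sink-source, I would exhibit an exceptional quasi-simple $X$ with $\mu(X_{R_X})>\mu(H_1)$. Then by Proposition~\ref{ds}(1), $\mu(X_{j+1})$ is a direct successor of $\mu(X_j)$ for all $j\geq 2R_X$, and by Corollary~\ref{ds1} no indecomposable preinjective module has GR measure $\mu(X_i)$ for $i\geq 2R_X$. But taking direct predecessors from such a $\mu(X_i)$ (which is a central measure, being $>\mu(H_1)$ but also $>$ all take-off measures) and invoking Lemma~\ref{preinj}, we reach GR measures realized only by preinjective modules; since $\mu_{-j}$ starts with some fixed $\mu_0$ eventually (Lemma~\ref{startwith}) but there are only finitely many preinjectives below $\mu(H_1)$, the predecessor sequence never terminates only if infinitely many $\mathcal{A}(\mu_{-j})$ lie above $\mu(H_1)$ and contain preinjectives — and each such preinjective is a central preinjective module. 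That already gives infinitely many isomorphism classes of preinjective central modules, proving $(3)$, and the same sequence extended in both directions is a $\mathbb{Z}$-indexed GR segment, giving $(4)$. The implications $(3)\Rightarrow(2)$ and $(4)\Rightarrow(2)$ are immediate (a $\mathbb{Z}$-indexed GR segment has $\mathcal{A}(\mu_i)$ consisting of preinjective modules for $i$ small by Lemma~\ref{preinj}, and those are central since the segment is $\mathbb{Z}$-indexed, hence contains no take-off measure).

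For $(2)\Rightarrow(1)$, I would argue the contrapositive: if $Q$ has sink-source orientation, then $a=0$, so there are no $\mathbb{Z}$-indexed GR segments (Proposition before Section~\ref{seq}'s $\mathbb{N}$-subsection), and moreover every exceptional quasi-simple $X$ has $\mu(X_{R_X})<\mu(H_1)$, which by Proposition~\ref{bigprop}(4a) forces $\mu(X_i)<\mu(H_j)$ for all $i,j$; combined with Proposition~\ref{bigprop}(1)--(2), every indecomposable preinjective module has GR measure either $<\mu(H_1)$ (only finitely many) or of the form $\mu(H_i)$ or strictly above the "regular ceiling" — and one checks that for sink-source $\widetilde{\mathbb{A}}_n$ the preinjectives with measure $\geq\mu(H_1)$ are precisely landing modules, never central. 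So there are no preinjective central modules, contradicting $(2)$. The main obstacle I anticipate is the type-specific input: verifying cleanly that for $\widetilde{\mathbb{A}}_n$ with sink-source orientation every preinjective with $\mu\geq\mu(H_1)$ is a landing module (equivalently, that the central part of the preinjective component is empty), whereas a non-sink-source orientation necessarily produces a preinjective central module. This should follow from the explicit combinatorics of the preinjective component and the known position of $\mu(H_1)$, but it is the step that genuinely uses $\widetilde{\mathbb{A}}_n$ rather than general tame type, and it is where I would spend the most care; the rest is a recombination of Propositions~\ref{bigprop}, \ref{ds}, Corollary~\ref{ds1}, and Lemmas~\ref{preinj}--\ref{regular2}.
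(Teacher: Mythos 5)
Your logical skeleton is sound and your handling of $(4)\Rightarrow(2)/(3)$ via Lemma \ref{preinj} matches the paper. But there are two problems. First, the paper does not reprove $(1)\Leftrightarrow(2)\Leftrightarrow(3)$ at all: it cites \cite{Ch4} for the equivalence of the first three statements. You instead try to derive these from the results of this paper, and the type-specific step you flag (that sink--source orientation forces every preinjective with measure at least $\mu(H_1)$ to be a landing module, while a non-sink-source orientation produces a preinjective central module) is exactly the content outsourced to \cite{Ch4}; as written it remains a sketch, not a proof.

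Second, and more seriously, your production of a $\mathbb{Z}$-indexed segment is circular. You start from $\mu(X_i)$ with $\mu(X_{R_X})>\mu(H_1)$ and $i\geq 2R_X$, take direct predecessors, and then say the sequence ``never terminates only if'' infinitely many $\mathcal{A}(\mu_{-j})$ contain preinjectives --- but you never establish that the predecessor sequence fails to terminate, which is precisely the assertion that the segment is $\mathbb{Z}$-indexed rather than $\mathbb{N}$-indexed. Nothing in Lemma \ref{startwith} (which concerns successors) or Lemma \ref{preinj} rules out the sequence stopping at a measure with no direct predecessor. The paper's proof of $(3)\Rightarrow(4)$ avoids this entirely by a counting argument you do not reproduce: by the main theorem there are only finitely many $\mathbb{N}$-indexed central segments; by Lemma \ref{preinj2} each contains only finitely many measures $\mu$ with $\mathcal{A}(\mu)$ meeting the preinjectives, and each such $\mathcal{A}(\mu)$ contains finitely many preinjectives; hence only finitely many preinjective central modules can live in $\mathbb{N}$-indexed segments, and statement $(3)$ then forces some preinjective central module into a $\mathbb{Z}$-indexed segment. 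You should replace your direct-exhibition argument with this finiteness argument, and either cite \cite{Ch4} for $(1)\Leftrightarrow(2)\Leftrightarrow(3)$ or actually carry out the orientation-specific analysis you defer.
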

\begin{proof}
The equivalences of the first three statements were already shown in \cite{Ch4}.

(4) implies (3) is obvious by Lemma \ref{preinj}.
Conversely, assume that statement (3) holds.
Let $\mathcal{A}=\cup_{\mu}\mathcal{A}(\mu)$, where the union is taken over all central GR measures $\mu\in\mathcal{S}_{\mathbb{N}}$ for some $\mathbb{N}$-indexed GR segments $\mathcal{S}_{\mathbb{N}}$.
This is a finite union
since the main theorem gives an upper
bound of the number of the $\mathbb{N}$-indexed GR segments.  On the other hand,
Lemma \ref{preinj2} implies that in each $\mathbb{N}$-indexed GR segment, there are only finitely
GR measures $\mu$ such that $\mathcal{A}(\mu)$ contains (finitely many) preinjective modules.
It follows that $\mathcal{A}$ contains only finitely many preinjective modules.
Therefore, there exists a $\mathbb{Z}$-indexed GR segment by (3) and the fact that a GR segment in the central part
is either $\mathbb{N}$- or $\mathbb{Z}$-indexed.
\end{proof}

\end{document}